\def\N{{\mathbb N}}
\def\Z{{\mathbb Z}}
\def\R{{\mathbb R}}
\def\P{{\mathbb P}}
\def\S{{\mathcal S}}
\def\lin{{\rm lin}}
\def\Vol{{{\rm Vol}}} 
\def\vol{{\rm vol}}
\def\conv{{\rm conv}}
\def\deg{{\rm deg}}
\newtheorem{theorem}{Theorem}[section]
\theoremstyle{definition}
\newtheorem{exam}[theorem]{Example}
\newtheorem{coro}[theorem]{Corollary}
\newtheorem{conj}[theorem]{Conjecture}
\newtheorem{prop}[theorem]{Proposition}
\theoremstyle{remark}
\newtheorem{rem}[theorem]{Remark}
\numberwithin{equation}{section}
\newcommand{\abs}[1]{\lvert#1\rvert}
\begin{document}

\title{3-Dimensional Lattice Polytopes\\Without Interior Lattice Points}

\author{Jaron Treutlein}
\address{Department of Mathematics and Physics, 
University of T\"ubingen, Auf der Morgenstelle 10, D-72076 T\"ubingen, Germany}
\email{jaron@mail.mathematik.uni-tuebingen.de}


\subjclass{Primary 52B20, 14M25; Secondary 14Q10}


\keywords{Lattice polytopes, Howe, Surfaces}

\begin{abstract}
A theorem of Howe states that every 3-dimensional lattice polytope 
$P$ whose only lattice points are its vertices, is a Cayley polytope, 
i.e. $P$ is the convex hull of two lattice polygons with distance one.\\
We want to generalize this result by classifying 3-dimensional lattice 
polytopes without interior lattice points. The main result will be, that 
they are up to finite many exceptions either Cayley polytopes or there 
is a projection, which maps the polytope to the double unimodular 
2-simplex.\\
To every such polytope we associate a smooth projective surface of 
genus $0$.
\end{abstract}

\maketitle

\section{Introduction}
\label{1}

Let $M\cong\Z^n$ be a lattice and $P\subset M\otimes_\Z\R$ be an 
$n$-dimensional lattice polytope, i.e. the set of vertices of $P$ 
is contained in $M$. However in the following we will allways 
suppose $M=\Z^n$.\\
Let $P\subset \R^3$ be a 3-dimensional lattice polytope. If 
$|P^\circ\cap\Z^3|=i>0$, then we will derive from Hensley's theorem 
\cite{He}, that its volume $\vol(P)$ is bounded by a constant 
depending only on $i$. Jeffrey Lagarias and G\"unter M. Ziegler 
proved in \cite{LZ}:
\begin{theorem}[Lagarias, Ziegler] 
 \label{LZ}
Let $F$ be a familiy of $n$-dimensional lattice polytopes. Then is 
equivalent:
\begin{enumerate}
 \item $\vol(P)<C$ $\forall P\in F$ with a constant $C>0$.
 \item Up to affine unimodular transformation, $F$ is a finite set.
\end{enumerate}
\end{theorem}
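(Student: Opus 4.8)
The plan is to prove the two implications separately; the direction (2) $\Rightarrow$ (1) is routine, while (1) $\Rightarrow$ (2) carries all the content. For (2) $\Rightarrow$ (1), note that the Euclidean volume $\vol(P)$ is invariant under affine unimodular transformations. Hence if $F$ consists of finitely many polytopes up to such transformations, then $\{\vol(P):P\in F\}$ is a finite set of real numbers and is therefore bounded; any such bound serves as $C$.

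For (1) $\Rightarrow$ (2) I would show that a volume bound forces every $P\in F$ to fit, after a suitable unimodular change of coordinates, into one fixed bounded box, after which finiteness drops out. Since $P$ is $n$-dimensional its vertex set contains $n+1$ affinely independent lattice points, so there is a full-dimensional lattice simplex inscribed in $P$; let $T=\conv\{v_0,\dots,v_n\}$ be one of maximal volume among all simplices inscribed in $P$. Because the volume is an affine function of each chosen vertex separately, the maximum is attained at vertices of $P$, so $T$ is automatically a lattice simplex, and $T\subseteq P$ gives $\vol(T)\le \vol(P)\le C$. The geometric heart is the classical fact that a maximal-volume inscribed simplex controls the whole body: for each $i$ the hyperplane through $v_i$ parallel to the opposite facet of $T$ cannot be crossed by $P$, since a point of $P$ lying beyond it would, on replacing $v_i$, yield an inscribed simplex of strictly larger volume. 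Intersecting these $n+1$ slabs shows that $P$ is contained in the dilate of $T$ by the factor $-n$ about its centroid $g$; in particular $P\subseteq (n+1)g-nT$, a translate of $-nT$.

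Next I would normalize $T$ arithmetically. Translating so that $v_0=0$, the matrix $A=[\,v_1\,|\,\cdots\,|\,v_n\,]$ is integral with $\abs{\det A}=n!\,\vol(T)=:D\le n!\,C$. Left multiplication by a matrix of $\mathrm{GL}_n(\Z)$, which is an affine unimodular transformation, brings $A$ to Hermite normal form: triangular with positive diagonal entries $d_1,\dots,d_n$ satisfying $d_1\cdots d_n=D$ and off-diagonal entries reduced to lie in $[0,D)$. Hence after this transformation $T\subseteq[0,D]^n$, and since $g\in[0,D]^n$ while $-nT\subseteq[-nD,0]^n$, the containment $P\subseteq(n+1)g-nT$ places $P$ inside the fixed box $[-nD,(n+1)D]^n$, whose side length is bounded solely in terms of $n$ and $C$.

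Finally, this box contains only finitely many lattice points, hence only finitely many lattice polytopes; since every $P\in F$ is carried by some affine unimodular transformation into this one box, $F$ comprises finitely many unimodular classes, as required. The main obstacle is the containment step: one must establish carefully that maximality of $\vol(T)$ yields the dilation bound, the subtle point being that the extremal simplex over \emph{all} inscribed simplices already has lattice vertices, so no separate lattice argument is needed there. A secondary technical point is controlling the Hermite normal form entries uniformly by $D$, which is precisely where the volume bound from hypothesis (1) is consumed.
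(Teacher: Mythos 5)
Your proof is correct: the maximal-volume inscribed simplex is automatically a lattice simplex, maximality forces the containment $P\subseteq(n+1)g-nT$, and the Hermite normal form reduction places every $P\in F$ (up to affine unimodular transformation) inside one fixed box of side bounded in terms of $n$ and $C$, which yields finiteness. Note that the paper itself contains no proof of this statement --- it is quoted as a known theorem from \cite{LZ} --- and your argument (extremal inscribed simplex, $-n$-dilation about the centroid, HNF normalization) is essentially the original argument of Lagarias and Ziegler in that reference, so it agrees with the implicit source proof.
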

Thus the finiteness up to affine unimodular transformation of 
3-dimensional lattice polytopes having exactly $i>0$ interior lattice 
points follows. So, it remains to consider the case $i=0$ in order to 
classify all 3-dimensional lattice polytopes.

\medskip

Let $\{e_1,\ldots,e_n\}\subset\R^n$ be the standard basis of $\R^n$. 
We denote the unimodular $n$-simplex $\conv(0,e_1,\ldots,e_n)$ by 
$\Delta_n$. If two lattice polytopes $P$ and $Q$ are equivalent modulo 
affine unimodular transformation, we will describe it by $P\cong Q$.

\medskip

John R. Arkinstall \cite{Ar}, Askold Khovanskii \cite{Kh97}, Robert J. 
Koelman \cite{Ko} and Josef Schicho \cite{Sch} proved that a lattice 
polygon without interior lattice points satisfies $P\cong2\Delta_2$ or 
$P\cong\conv(0,e_1,h_1e_2,e_1+h_2e_2)$ with some heights $h_1,h_2\in\N$. 
The latter is called a Lawrence polygon. Thus a lattice polygon without 
interior lattice points either has a projection to $\Delta_1$, which is 
the only 1-dimensional lattice polytope without interior lattice points, 
or it is equivalent to $2\Delta_2$.

\medskip

Victor Batyrev conjected that any 3-dimensional lattice polytope without 
interior lattice points either can be projected to $\Delta_1$, to 
$2\Delta_2$ 
or its volume is bounded. From Theorem \ref{LZ} it will follow that up to 
unimodular transformation there is only a finite number of exceptional 
polytopes of this last class.

\medskip

A lattice polytope $P\subset\R^{n}$ is described as the Cayley polytope of 
the lattice polytopes $P_0,\ldots, P_{r-1}\subset\langle e_{r},\ldots,e_n
\rangle, 1<r\leq n,$ if $P\cong\conv(0\times P_0, e_1\times P_1,\ldots, 
e_{r-1}\times P_{r-1})$. Then we call $P$ simply a Cayley polytope and 
notice that these are the lattice polytopes which can be projected to 
$\Delta_1$.\\
An $n$-dimensional lattice polytope is called a $k$-fold lattice pyramid 
over an $(n-k)$-dimensional lattice polytope $Q$, if it is the Cayley 
polytope of $Q$ and $k$ lattice points.

\medskip

It is clear, that lattice polytopes having a projection to $\Delta_1$ or 
$2\Delta_2$, both have no interior lattice points.\\ Roger Howe proved in 
1977 the following:
\begin{theorem}[Howe]
\label{Howe}
Let $P\subset \R^3$ be a lattice polytope whose only lattice points are 
vertices. Then $|P\cap\Z^3|\leq8$ and $P$ is a Cayley polytope.
\end{theorem}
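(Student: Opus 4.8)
The plan is to reduce both conclusions to a single assertion: that $P$ admits a primitive lattice functional $\ell\colon\Z^3\to\Z$ of lattice width one, i.e.\ with $\ell(P)=[c,c+1]$ for some $c\in\Z$. Granting this, everything follows at once. Since every vertex of $P$ is a lattice point, its $\ell$-value is an integer in $\{c,c+1\}$, so $P=\conv(P_0\cup P_1)$ with $P_0=P\cap\{\ell=c\}$ and $P_1=P\cap\{\ell=c+1\}$; after the unimodular change of coordinates sending $\ell$ to the last coordinate this is exactly the Cayley form $\conv(0\times P_0,\,e_1\times P_1)$, so $P$ is a Cayley polytope. For the bound, note that $P_0$ and $P_1$ are faces of $P$, hence lattice polygons each of whose lattice points is a vertex of $P$ lying in the face, hence a vertex of the face. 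By the planar classification recalled above, such a polygon is equivalent to $\Delta_2$ or to a unit parallelogram, so it has at most four vertices. Since every vertex of $P$ lies in $P_0$ or $P_1$, we obtain $\abs{P\cap\Z^3}\le 8$.

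It remains to construct $\ell$, and the first step is to analyse the facets. Every facet $F$ of $P$ is a $2$-dimensional lattice polygon, and every lattice point of $F$ is a lattice point of $P$, hence a vertex of $P$ lying on $F$, hence a vertex of $F$; thus $F$ has only its vertices as lattice points, so $F\cong\Delta_2$ or $F$ is a unit parallelogram, and in particular every edge of $P$ is primitive. I would then settle the genuine base case in which $P$ is a tetrahedron: here $P$ is an \emph{empty} simplex, and by White's classification of empty lattice tetrahedra one may assume after a unimodular transformation that $P=\conv\{0,\,e_1,\,e_3,\,pe_1+qe_2+e_3\}$ with $0\le p<q$ and $\gcd(p,q)=1$. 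The third coordinate then takes the values $0,0,1,1$ on the four vertices, so $P$ has lattice width one. (If one prefers not to cite White, this can be proved directly: normalizing a facet to $\conv\{0,e_1,e_2\}$ in $\{x_3=0\}$, the apex is $(a,b,h)$ with normalized height $h$, and a reduction modulo the fundamental parallelepiped of the three edges at $0$ shows that $h>1$ forces a non-vertex lattice point.)

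For general $P$ I would triangulate $P$ into lattice tetrahedra using only its vertices; since $P$ has no other lattice points, every cell is an empty tetrahedron and hence, by the previous step, carries a primitive width-one direction. The crux of the proof, and the step I expect to be the main obstacle, is to show that these local directions can be chosen to agree, yielding one global $\ell$. I would argue by propagation across shared triangular facets along the connected dual graph of the triangulation: normalizing a shared facet to $\conv\{0,e_1,e_2\}$ in $\{x_3=0\}$, the emptiness of the two adjacent tetrahedra together with the absence of non-vertex lattice points in their union should force both apices to the third coordinate $\pm 1$, making $e_3$ a common width-one direction. The real difficulty lies in controlling the ambiguity of each local direction, handling parallelogram facets (which glue neighbouring tetrahedra into prisms), and propagating consistently around cycles of the dual graph; it is precisely here that one must rule out every configuration of lattice width $\ge 2$ by producing a non-vertex lattice point, and this case analysis is where the substance of the theorem resides.
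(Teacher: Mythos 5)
The paper never proves Theorem \ref{Howe}: it is quoted from the literature (Scarf's write-up of Howe's argument, together with the alternative proofs of White, Reznick, Seb\H{o}, Morrison--Stevens), and the paper's own arguments only \emph{use} it as a black box. So your proposal has to stand on its own, and parts of it do: the reduction of both conclusions to the existence of a primitive functional $\ell$ with $\ell(P)=[c,c+1]$ is sound, since then $P=\conv(P_0\cup P_1)$ is a Cayley polytope in the paper's sense, and each $P_i$ is a face all of whose lattice points are vertices, hence a point, a primitive segment, a copy of $\Delta_2$ or a unit parallelogram, giving $\abs{P\cap\Z^3}\le 8$. (Incidentally, the bound of $8$ needs no width argument at all: among $9$ lattice points two are congruent modulo $2$, and their midpoint would be a non-vertex lattice point of $P$.) Citing White's classification for the tetrahedron case is also legitimate, although that citation carries most of the theorem's actual content.

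The genuine gap is the local-to-global step, which you explicitly leave open, and the mechanism you propose for it is false. After normalizing a shared facet to $\conv\{0,e_1,e_2\}\subset\{x_3=0\}$, you assert that emptiness of the two adjacent tetrahedra (and of their union) ``should force both apices to the third coordinate $\pm1$.'' The Reeve tetrahedron $\conv\{0,e_1,e_2,(1,1,t)\}$ is empty for every $t\ge1$, and gluing it along $\conv\{0,e_1,e_2\}$ to the unimodular tetrahedron with apex $-e_3$ yields two empty tetrahedra whose union contains no non-vertex lattice point, yet one apex sits at height $t$, arbitrarily large. The union does have width one, but in the direction $x_1$, not $x_3$: the width-one direction of an empty tetrahedron is in general \emph{not} the normal direction of a prescribed facet, which is exactly why naive propagation across the dual graph fails and why the published proofs require a substantially different global argument. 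The same example refutes your parenthetical claim that, over a unimodular facet, $h>1$ forces a non-vertex lattice point. Since you yourself identify the consistency of the local directions as ``where the substance of the theorem resides,'' what you have is an outline whose central step is missing; as written it establishes the theorem only for tetrahedra, and that by citation.
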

As he did not publish it, Herbert E. Scarf did it \cite{Sca}. There are 
further proofs for example by G.K. White \cite{Whi}, Bruce Reznick \cite{Re}, 
Andr\'{a}s Seb\H{o} \cite{Se}, David R. Morrison and Glenn Stevens \cite{MS}.

\medskip

The aim of this paper is to prove Victor Batyrev's conjecture and hence ge-
 neralize Theorem \ref{Howe}:
\begin{theorem}
\label{Main}
 Let $P\subset \R^3$ be a lattice polytope without interior lattice points. 
Then either is $P$ a Cayley polytope, $P$ can be projected to the double 
unimodular 2-simplex or $P$ is an exceptional polytope, whereas up to 
unimodular transformation there is only a finite number of these.
\end{theorem}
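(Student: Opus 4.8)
The plan is to prove Theorem \ref{Main} by analyzing the lattice polytope $P$ through its \emph{lattice width} in each primitive direction. Recall that the lattice width of $P$ in a direction $u\in(\Z^3)^*$ is $\max_{x\in P}\langle u,x\rangle-\min_{x\in P}\langle u,x\rangle$, and the lattice width $w(P)$ of $P$ is the minimum over all primitive $u$. My first step would be to observe the dichotomy: if $w(P)=1$, then $P$ lies between two consecutive lattice hyperplanes, so after an affine unimodular transformation $P=\conv(0\times P_0,\,e_1\times P_1)$ for lattice polygons $P_0,P_1$; this is precisely the statement that $P$ is a Cayley polytope (projectable to $\Delta_1$). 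Hence the entire problem reduces to understanding polytopes with $w(P)\geq 2$, and the goal becomes showing that such $P$ either projects onto $2\Delta_2$ or belongs to a finite exceptional list.

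\begin{proof}
The key technical engine I would invoke is the relationship between interior lattice points and lattice width. By Hensley's theorem (cited in the excerpt), a polytope with at least one interior lattice point has bounded volume, and by Theorem~\ref{LZ} there are finitely many such polytopes up to unimodular equivalence; however here $P^\circ\cap\Z^3=\varnothing$, so I must argue differently. The plan is to consider the lattice width. A finiteness result of the Lagarias--Ziegler type, combined with known bounds on the lattice width of hollow polytopes (those with empty interior), tells us that for $n=3$ the lattice width is universally bounded: every hollow $3$-polytope satisfies $w(P)\leq c$ for an absolute constant $c$. I would first establish this width bound, distinguishing the case where the width is realized by projecting $P$ onto a hollow polygon. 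Since the only hollow lattice polygons are, by the Arkinstall--Khovanskii--Koelman--Schicho classification quoted above, either projectable to $\Delta_1$ or equivalent to $2\Delta_2$, a projection of $P$ onto such a polygon either exhibits $P$ as a Cayley polytope or yields the desired projection to $2\Delta_2$.

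The main work, and the main obstacle, lies in the remaining case: polytopes $P$ of width $\geq 2$ whose lattice-width directions do not factor through a hollow polygon, i.e.\ $P$ does \emph{not} project onto any hollow $2$-dimensional polytope. Here I would argue that such $P$ must have bounded volume, placing them among the finitely many exceptional polytopes. The strategy is as follows. If $P$ does not project to a hollow polygon, then every projection of $P$ to $\R^2$ along a primitive direction is a polygon \emph{with} an interior lattice point; I would use this to force a lower bound relating the number of lattice points in the fibers of the projection to the volume, and combine it with the empty-interior hypothesis on $P$ itself to derive an upper bound on $\vol(P)$. Concretely, fix a projection $\pi\colon\R^3\to\R^2$ realizing small width; the fibers $\pi^{-1}(q)\cap P$ over lattice points $q$ in the interior of $\pi(P)$ are segments of bounded lattice length (else $P$ would have an interior lattice point), and controlling both the base polygon and the fiber lengths bounds $\vol(P)=\int_{\pi(P)}\ell(q)\,dq$. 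Once $\vol(P)$ is bounded, Theorem~\ref{LZ} immediately yields finiteness up to unimodular transformation, completing the classification into the three stated classes.

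I expect the hardest step to be the volume bound in the non-Cayley, non-$2\Delta_2$-projectable case, because one must rule out ``tall but hollow'' configurations where long fibers conspire to avoid interior points while the base polygon stays large; the delicate point is that hollowness of $P$ constrains not just individual fibers but their joint behavior over adjacent lattice points of the base, via the convexity of $P$. Controlling this interaction --- essentially showing that a large hollow $3$-polytope is forced to project to a hollow polygon --- is where the genuine three-dimensional geometry enters, and where I anticipate needing a careful case analysis on the shape of the base polygon $\pi(P)$ together with the heights of $P$ over its edges and vertices. The finitely many exceptional polytopes would then be pinned down by an explicit (computer-assisted) enumeration over the bounded volume range.
\end{proof}
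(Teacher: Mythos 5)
Your reduction is sound as far as it goes: width $1$ means $P$ lies between consecutive lattice planes and is hence a Cayley polytope, and a projection onto a hollow polygon yields, by the planar classification, either a Cayley polytope or a projection onto $2\Delta_2$; so Theorem \ref{Main} is equivalent, via Theorem \ref{LZ}, to the assertion that a hollow $3$-polytope admitting no projection onto a hollow polygon has bounded volume. The trichotomy matches the statement. The problem is that this last assertion \emph{is} the entire mathematical content of the theorem, and your proposal does not prove it: you describe the obstacle accurately (``tall but hollow'' configurations, the joint behavior of fibers over adjacent base points), and then defer its resolution to ``a careful case analysis'' and a ``computer-assisted enumeration'' that are never carried out. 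The fiber-integral strategy as stated cannot close this on its own. Your sub-claim about fibers is correct --- the open fiber over an interior point of $\pi(P)$ lies in $P^\circ$, so fibers over interior \emph{lattice} points of the base have lattice length at most $1$ --- but this bounds neither the area of the base $\pi(P)$ nor the fiber lengths over the rest of the base; a priori the base could be a long thin polygon with a single interior lattice point, with long fibers over the regions far from it, and ruling this out by convexity is exactly where all the work lies.

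There is also a secondary confusion worth flagging: you derive the universal width bound $w(P)\le c$ for hollow $3$-polytopes from ``a finiteness result of the Lagarias--Ziegler type combined with known bounds.'' Theorem \ref{LZ} converts \emph{volume} bounds into finiteness; it says nothing about width, and a width bound cannot be fed back into it, since width-bounded hollow polytopes (all Cayley polytopes, for instance) form an infinite family of unbounded volume. The bound you want is Khinchine's flatness theorem for hollow bodies, which would be fine to cite but is not what the paper uses and does not by itself produce the needed volume bound. The paper instead proves the volume bound through a completely different decomposition: either every lattice point of $P$ lies on an edge (the ``white'' case, handled by an induction built on Howe's Theorem \ref{Howe} through Propositions \ref{White}, \ref{Simplex}, \ref{Pyramide}, \ref{Circuit} and \ref{empty}), or some facet has an interior lattice point (Proposition \ref{inner}, where the slice $P\cap(X_3-1)^\perp$ and truncated cones give explicit bounds). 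Some such case analysis --- the paper's, or the one you postpone --- is unavoidable, so the proposal as written has a genuine gap at its central step.
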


\medskip

Some of these exceptional simplices ($P_1,\ldots,P_6$) are in figure 1. 
All the others (there are 15 more simplices) are included in one of them. 
Thus we desribe $P_1,\ldots,P_6$ as maximal exceptional lattice simplices.

\medskip

 Moreover there are maximal exceptional lattice polytopes which are not 
simplices, see for example $P_7,P_8$ and $P_9$ in figure 2. It is not 
known if these are the only maximal lattice polytopes.\\
These are the known maximal exceptional lattice polytopes:
\begin{center}
$P_1:=\conv(0,e_1, 2e_1+5e_2, 3e_1+5e_3)$\\ 
$P_2:=\conv(0, 3e_1, e_1+3e_2, 2e_1+3e_3)$\\ 
$P_3:=\conv(0, 3e_1, 3e_2, 3e_3)$\\ 
$P_4:=\conv(0, 4e_1, 4e_2, 2e_3)$\\ 
$P_5:=\conv(0, 4e_1, 2e_1+4e_2, e_1+2e_3)$\\ 
$P_6:=\conv(0, 6e_1, 3e_2, 2e_3)$\\ 

\medskip
$P_7:=\conv(\pm 2e_1, \pm 2e_2, e_1+e_2+2e_3)$\\ 
$P_8:=\conv(\pm e_1, 2e_2, e_1+2e_3\pm e_1, e_1+2e_2+2e_3)$\\ 
$P_9:=\conv(\pm e_1, \pm e_2, e_1+e_2+2e_3 \pm 
e_1, e_1+e_2+2e_3\pm e_2).$ 
\end{center}

\begin{figure}[ht]
\begin{minipage}[c]{0.3\textwidth}
\centering \includegraphics[width=1.5in]{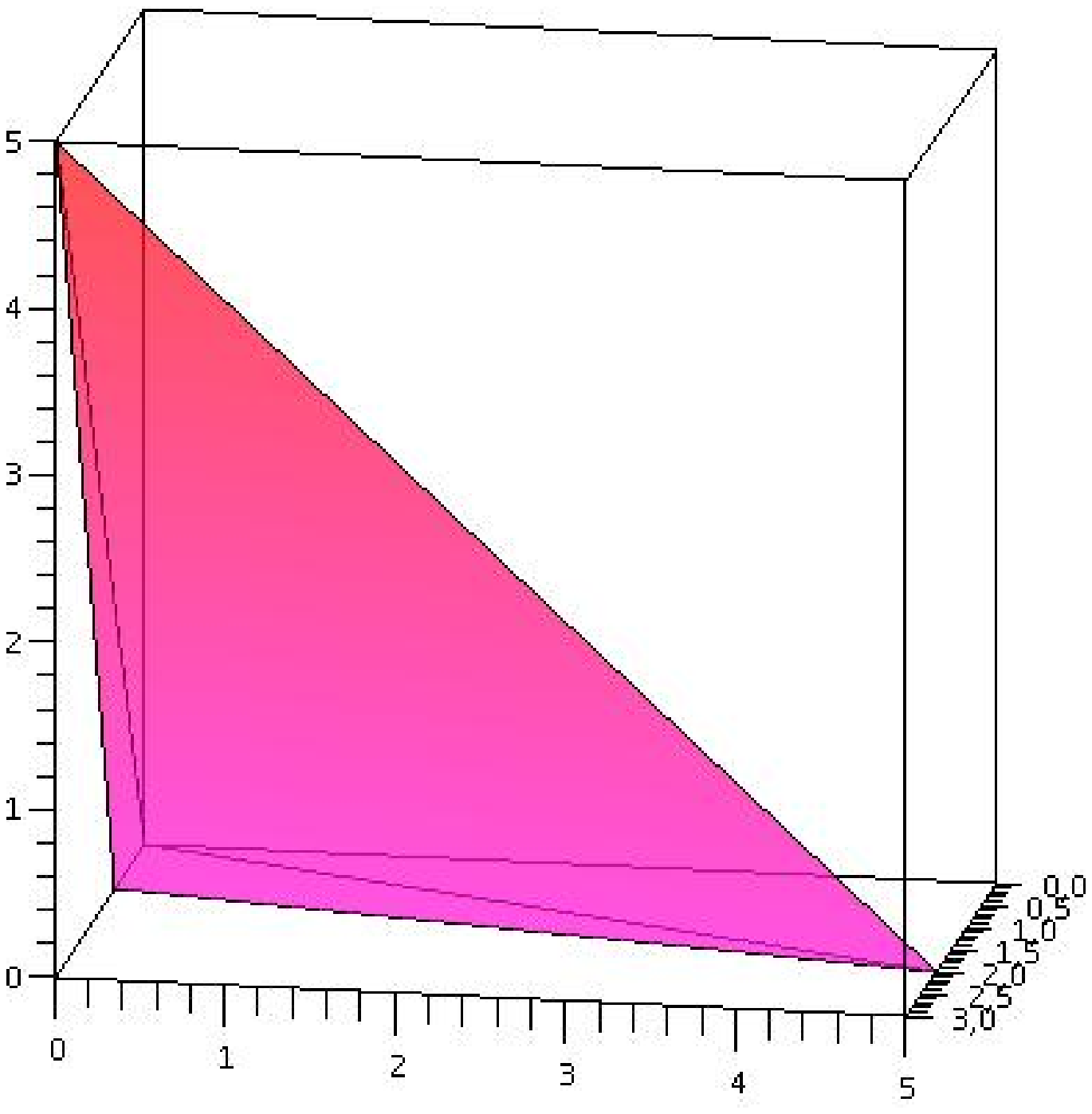} 
\end{minipage}
\begin{minipage}[c]{0.3\textwidth}
\centering \includegraphics[width=1.5in]{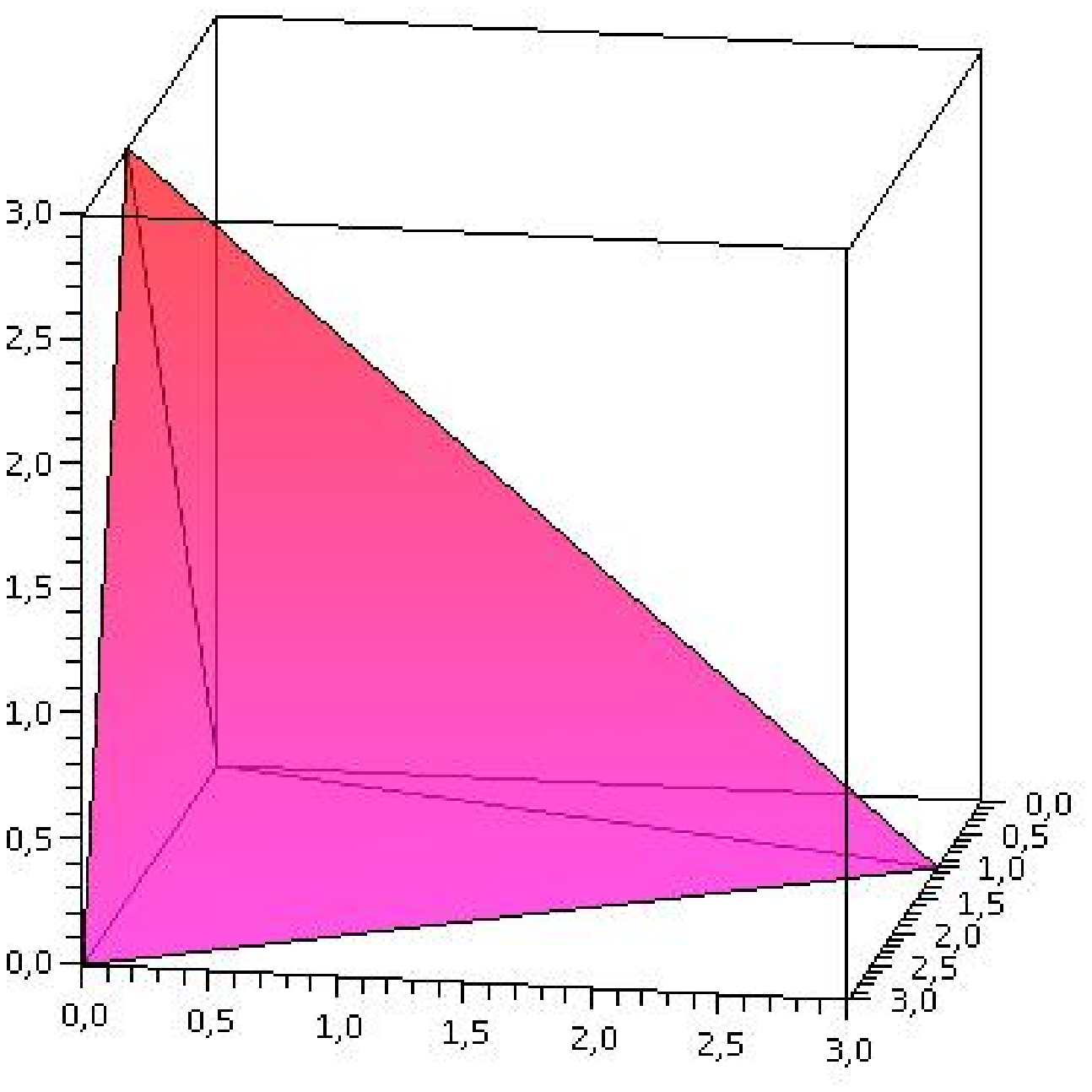} 
\end{minipage}
\begin{minipage}[c]{0.3\textwidth}
\centering\includegraphics[width=1.5in]{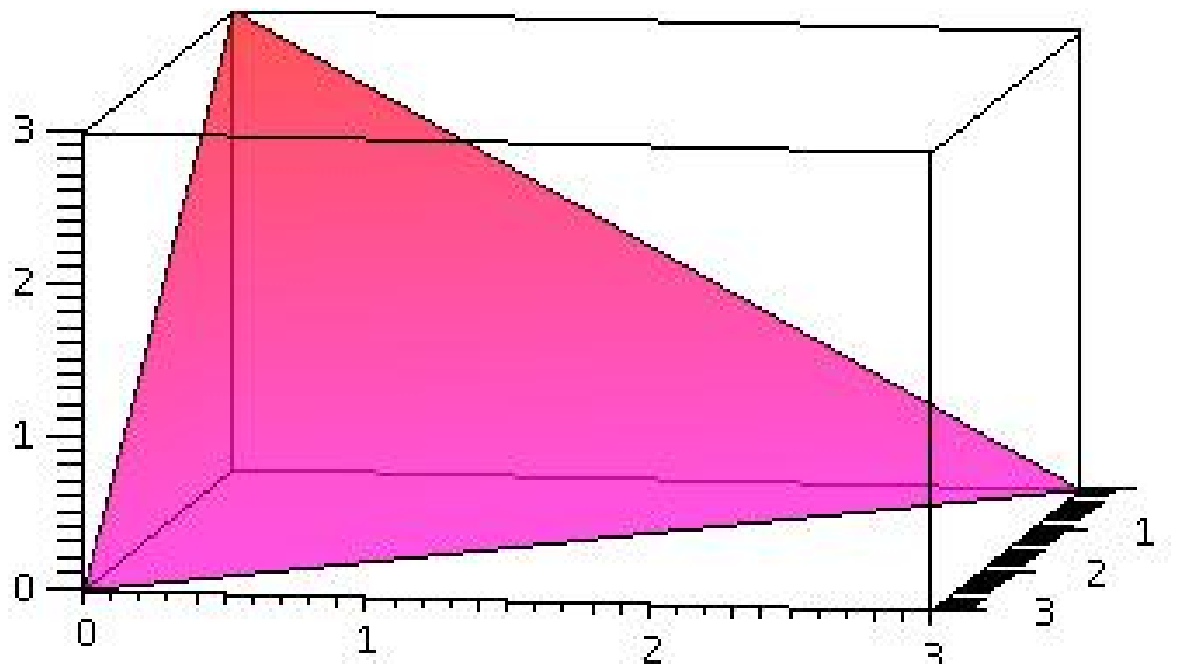} 
\end{minipage}
\begin{minipage}[c]{0.3\textwidth}
\centering\includegraphics[width=1.5in]{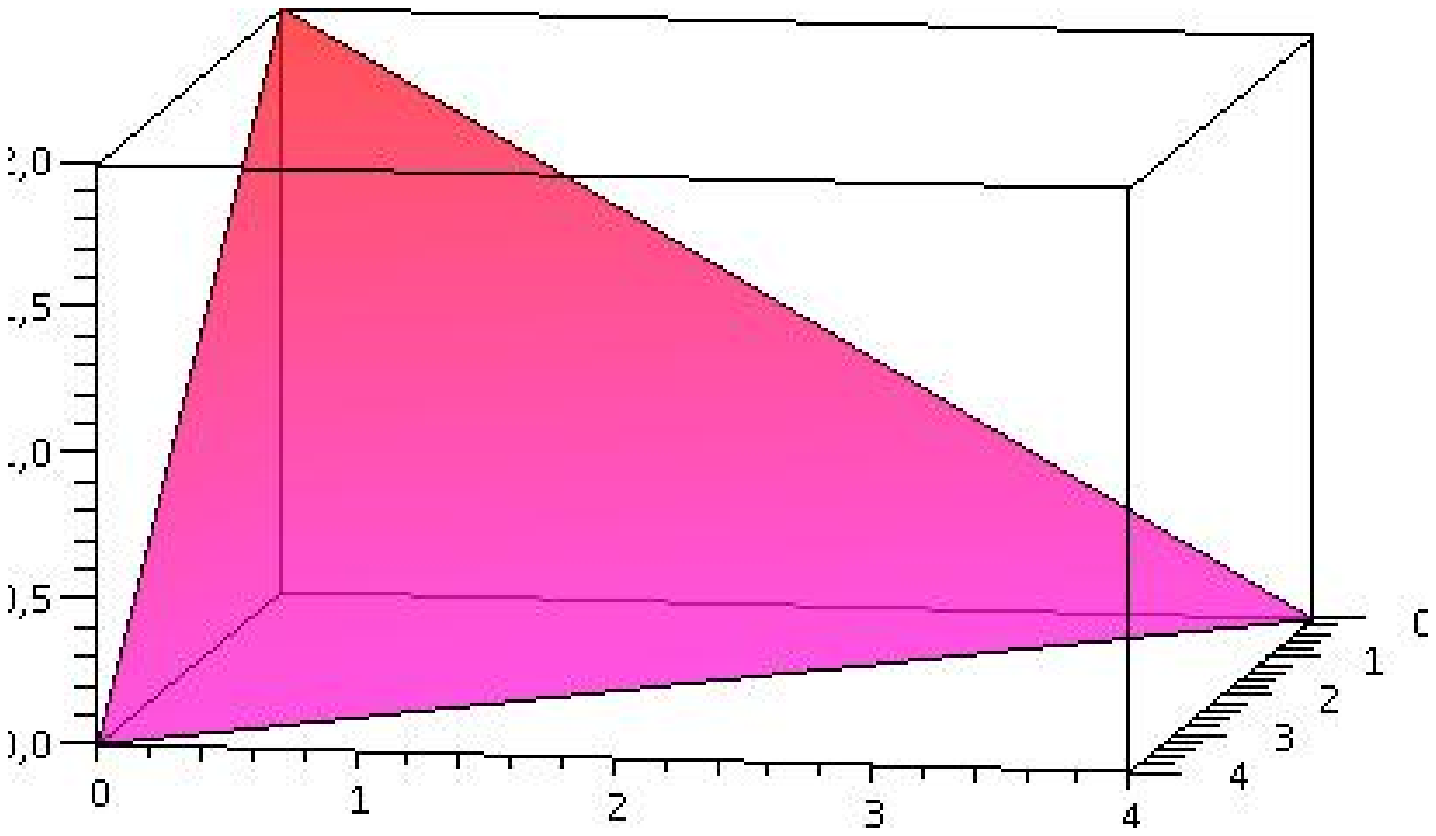} 
\end{minipage}
\begin{minipage}[c]{0.3\textwidth}
\centering\includegraphics[width=1.5in]{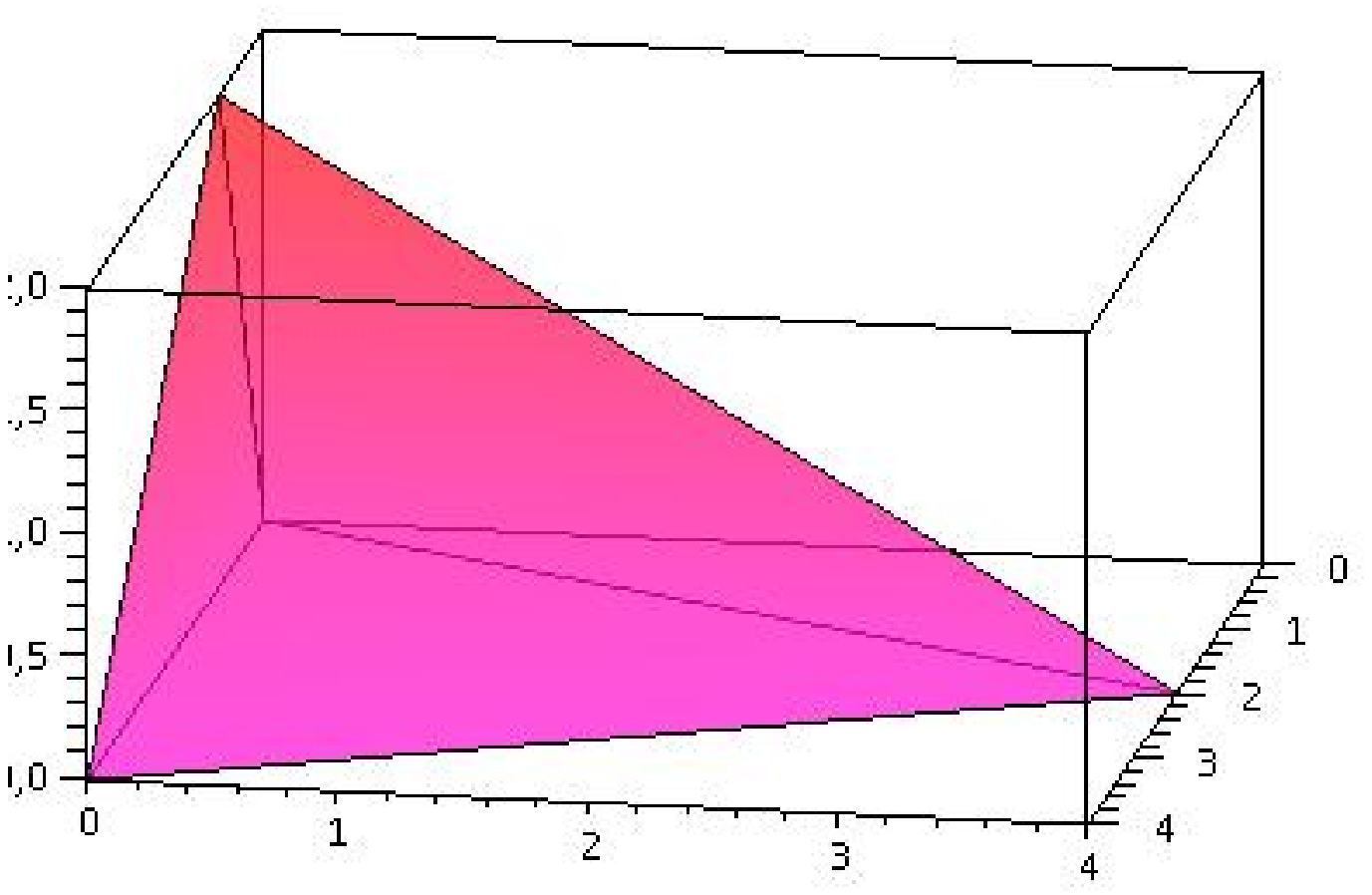} 
\end{minipage}
\begin{minipage}[c]{0.3\textwidth}
\centering\includegraphics[width=1.5in]{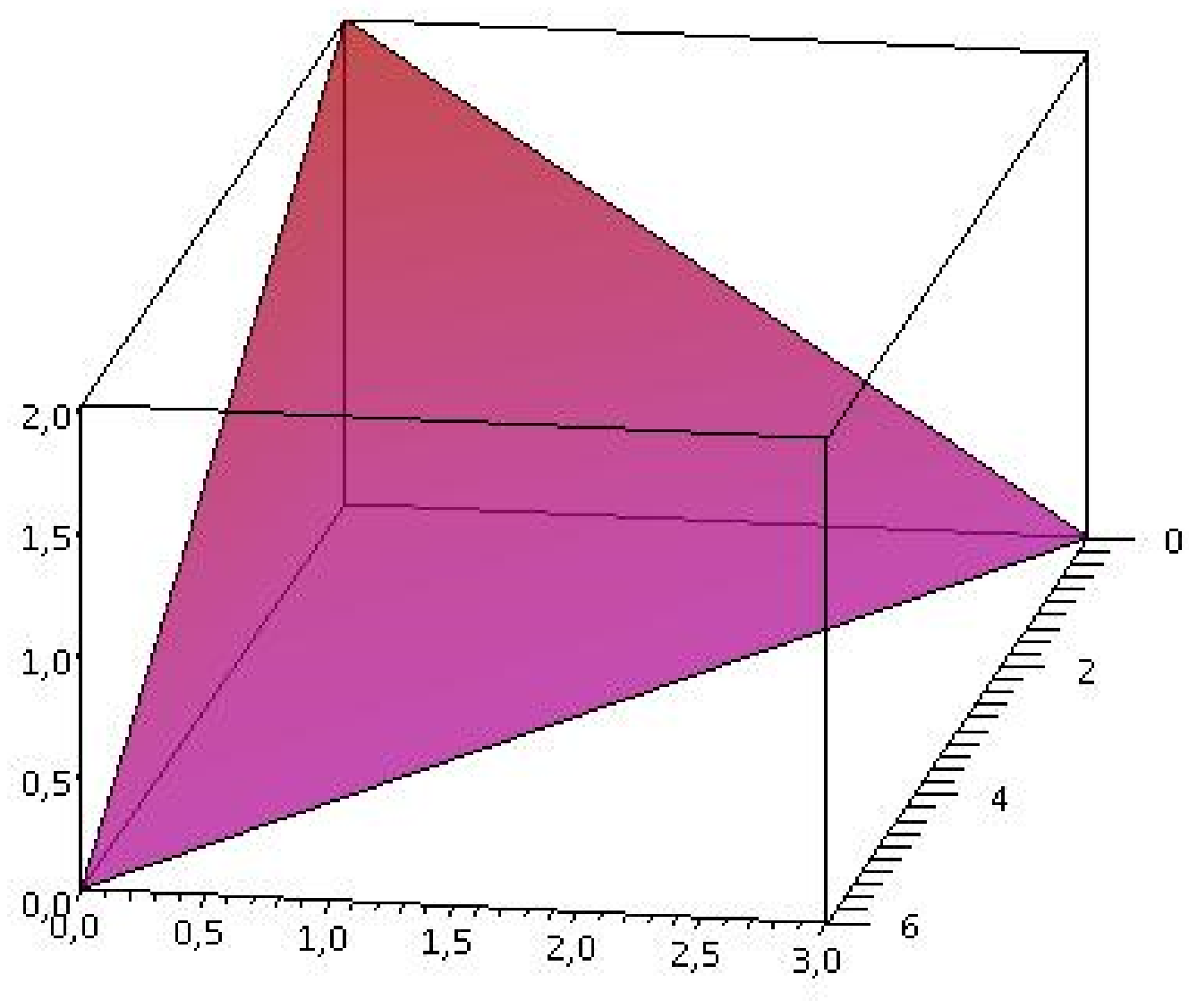} 
\end{minipage}

\caption{The maximal exceptional lattice simplices}
\end{figure}

\begin{figure}[ht]
\begin{minipage}[c]{.3\textwidth}
\centering\includegraphics[width=1.5in]{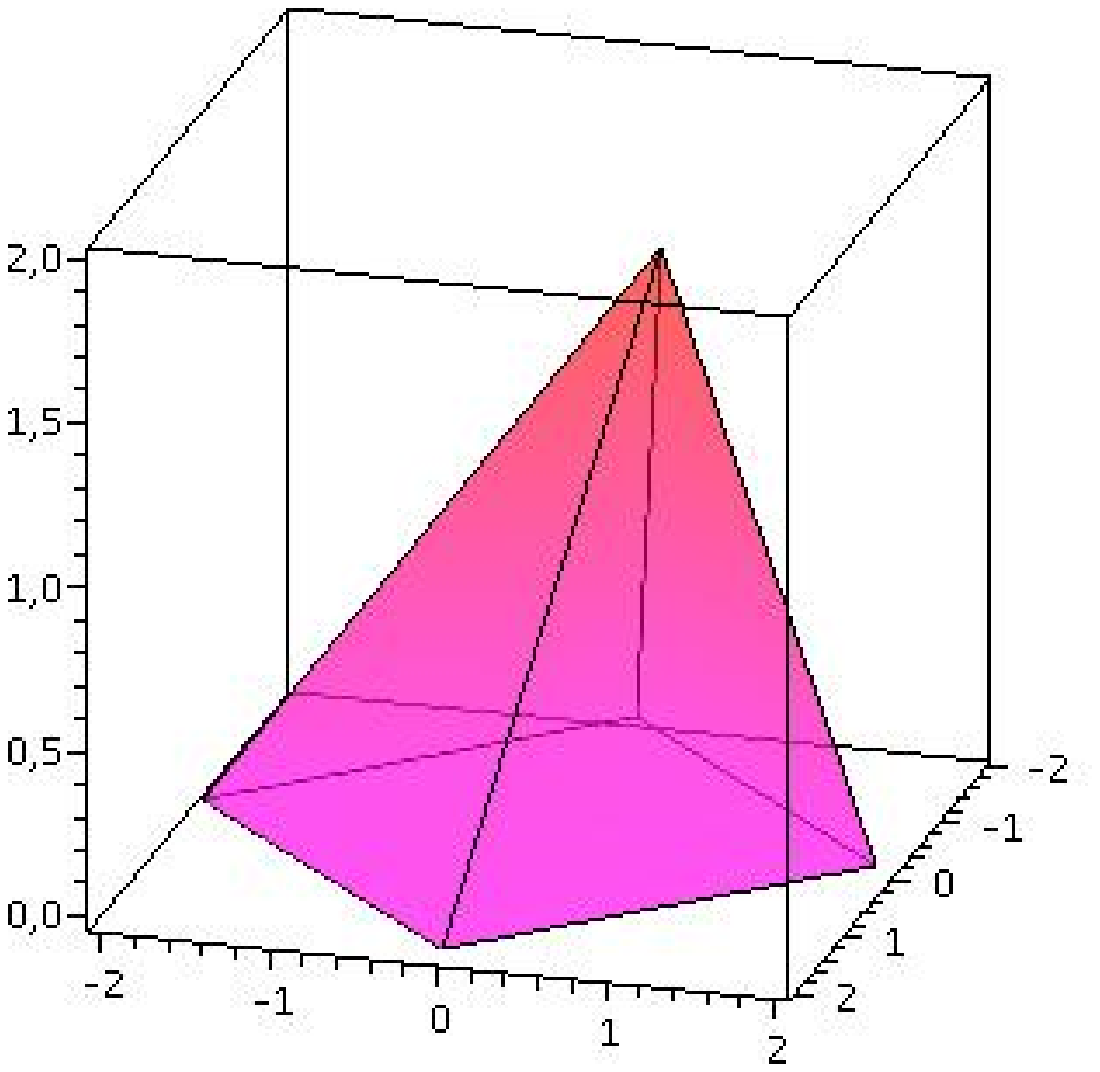} 
\end{minipage}
\begin{minipage}[c]{.3\textwidth}
\centering\includegraphics[width=1.5in]{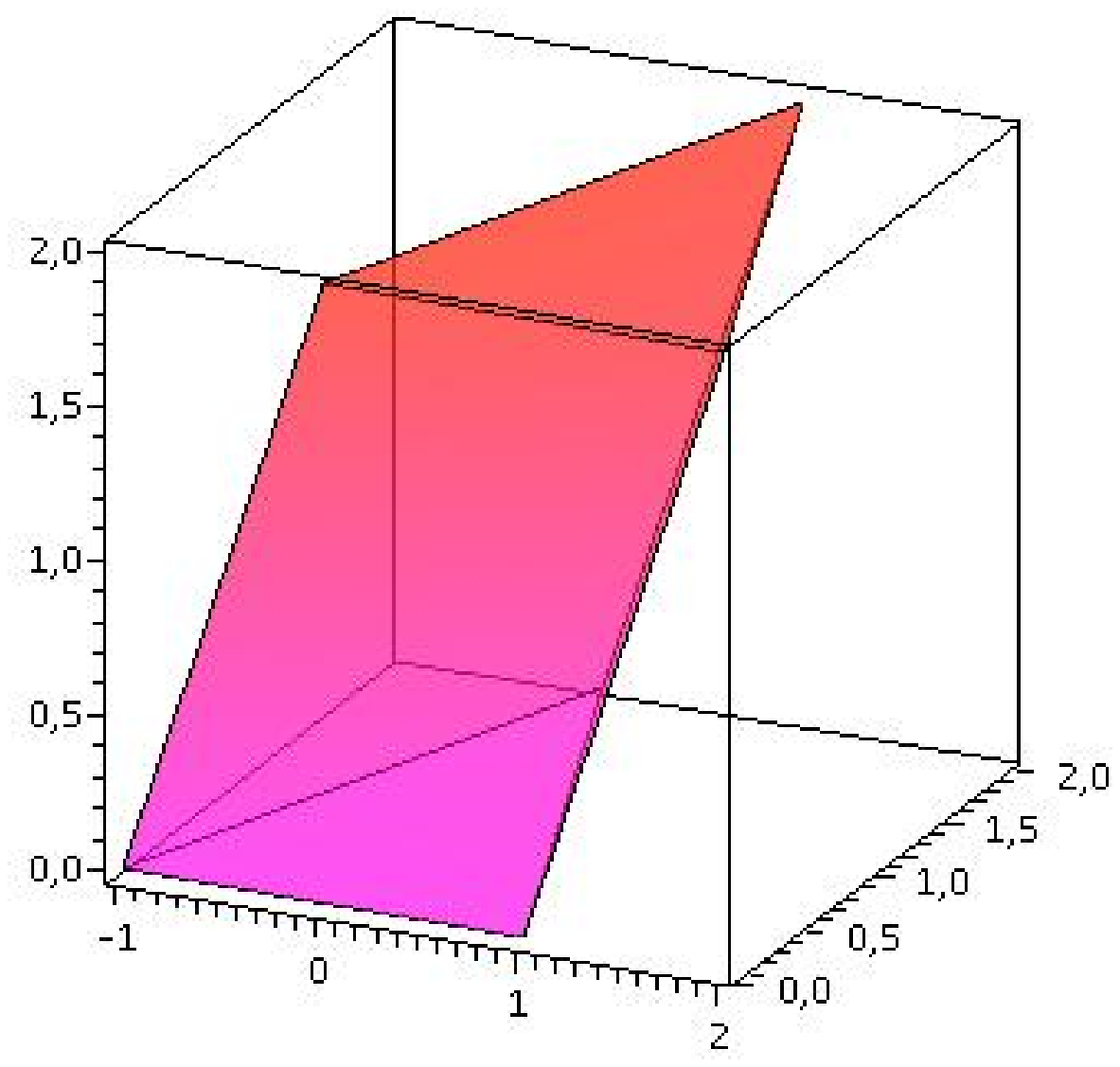} 
\end{minipage}
\begin{minipage}[c]{.3\textwidth}
\centering\includegraphics[width=1.5in]{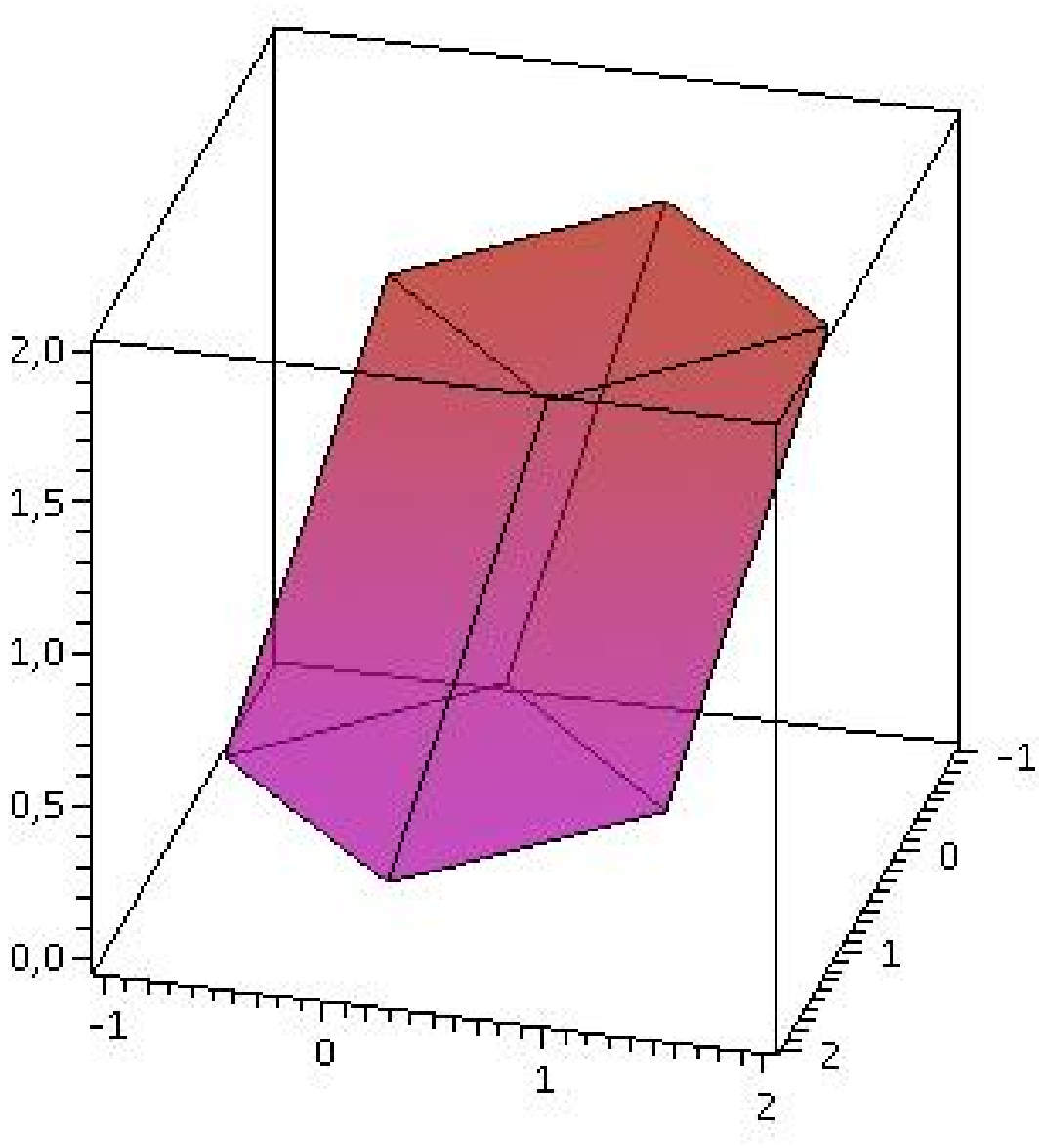} 
\end{minipage}
\caption{Some more maximal exceptional lattice polytopes}
\end{figure}

\bigskip

\begin{conj}
 Up to a finite number of exceptional lattice polytopes to every 
$n$-dimensional lattice polytope without interior lattice points, 
there is a projection mapping it onto an $m$-dimensional lattice 
polytope without interior lattice points, $m<n$.
\end{conj}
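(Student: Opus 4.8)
The plan is to prove the conjecture by induction on $n$, reducing it to a volume bound and then invoking Theorem~\ref{LZ}. Concretely, I would aim to produce a constant $V(n)$ with the property that every hollow (i.e.\ interior-lattice-point-free) $n$-polytope $P$ with $\vol(P)>V(n)$ admits a lattice projection onto a hollow polytope of dimension $<n$; by Theorem~\ref{LZ} only finitely many hollow $n$-polytopes (up to unimodular equivalence) can then fail the projection property, and these form the exceptional family. The base cases are already in hand: $n=1$ is trivial, $n=2$ is the Arkinstall--Khovanskii--Koelman--Schicho classification (Lawrence polygons project to $\Delta_1$, and $2\Delta_2$ is the single exception), and $n=3$ is exactly Theorem~\ref{Main}. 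For the inductive step I would first invoke the classical flatness theorem to fix a primitive functional $u$ realising the lattice width $w=w(P,u)=\max_{x\in P}u(x)-\min_{x\in P}u(x)\le c(n)$. If $w=1$ then $P$ lies between two consecutive lattice hyperplanes and is the convex hull of its two slices, hence a Cayley polytope projecting onto $\Delta_1$; this case is done.

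The heart of the matter is $w\ge 2$. Writing $P_k=P\cap\{u=k\}$ for the lattice slices, the key structural observation is that every \emph{middle} slice $P_1,\dots,P_{w-1}$ is a hollow $(n-1)$-polytope: a lattice point in the relative interior of $P_j$ with $0<j<w$ would be an interior lattice point of $P$. Next I would use Brunn--Minkowski: the function $k\mapsto\vol_{n-1}(P_k)^{1/(n-1)}$ is concave on $[0,w]$, so the volume of any slice (including the top and bottom slices $P_0,P_w$, which need not be hollow) is bounded by $w^{n-1}$ times the volume of any fixed middle slice, and hence $\vol(P)\le w\cdot\max_k\vol_{n-1}(P_k)$ is controlled by a single middle slice $P_j$. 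Applying the inductive hypothesis to the hollow $(n-1)$-polytope $P_j$ splits the argument into two cases.

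If $P_j$ is one of the finitely many exceptional $(n-1)$-polytopes, then $\vol_{n-1}(P_j)$ is bounded, whence $\vol(P)$ is bounded and $P$ joins the exceptional family. If instead $P_j$ projects onto a hollow $(n-2)$-polytope via a lattice projection $\sigma$ with kernel direction $v\in u^\perp$, I would lift $\sigma$ to the projection $\Sigma$ of $\R^n$ with kernel $\langle v\rangle$. The slices of $\Sigma(P)$ in the $u$-coordinate are the images $\sigma(P_0),\dots,\sigma(P_w)$, so $\Sigma(P)$ is a hollow $(n-1)$-polytope precisely when all of its middle slices $\sigma(P_1),\dots,\sigma(P_{w-1})$ are relatively hollow. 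For $w=2$ there is a single middle slice and this is exactly the property we arranged, so $\Sigma(P)$ is hollow and $P$ projects onto a hollow polytope, as desired.

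The main obstacle is the case $w\ge 3$: here the trivial lift only guarantees that $\sigma(P_j)$ is hollow, whereas the hollowness of $\Sigma(P)$ requires all $\sigma(P_i)$, $0<i<w$, to be simultaneously hollow, and the projections furnished by the inductive hypothesis for different middle slices may use incompatible directions $v_i$. Resolving this will require a finer analysis of the set of lattice-width-minimising directions of $P$ and of how the hollow middle slices are nested by convexity, so as to select one direction $v$ that projects \emph{every} middle slice onto a hollow polytope (or, failing that, to bound $\vol(P)$ directly from the hollowness of all $w-1\ge 2$ middle slices). The same analysis must also dispose of the degenerate situations in which a middle slice drops dimension, where $P$ is very thin transversally and a projection should be read off directly. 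I expect this simultaneous-projection step, together with the bookkeeping needed to keep the exceptional families finite across the induction, to be the technically decisive part of the proof.
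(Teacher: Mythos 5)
This statement is stated in the paper as an open conjecture: the paper proves only the case $n=3$ (Theorem \ref{Main}) and the cases $n\le 2$ follow from the classification quoted in Section \ref{1}, so there is no paper proof to compare against, and your text is in any event explicitly a plan rather than a proof. You yourself flag the decisive unresolved step: for width $w\ge 3$ you must find a \emph{single} primitive direction $v\in u^\perp$ whose induced projection makes \emph{all} middle slices $\sigma(P_1),\dots,\sigma(P_{w-1})$ simultaneously hollow, and the inductive hypothesis only hands you (possibly incompatible) directions $v_j$, one per slice. Nothing in the proposal supplies this selection, and it is not a routine bookkeeping matter — it is the heart of the problem, since the exceptional branch only controls $\vol(P)$ when a middle slice is exceptional, not when the projection directions fail to align.

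There is also a more basic, unacknowledged flaw that breaks the argument already at $w=2$: the middle slices $P_j=P\cap\{u=j\}$ are in general \emph{not} lattice polytopes. Their vertices are rational points (for $w=2$ the slice $P_1$ of a simplex has vertices at midpoints of edges), so the inductive hypothesis — a statement about lattice polytopes without interior lattice points — simply does not apply to $P_j$. Both branches of your dichotomy then collapse: ``$P_j$ exceptional, hence $\vol_{n-1}(P_j)$ bounded'' rests on finiteness up to unimodular equivalence, which is meaningful only for lattice polytopes (Theorem \ref{LZ} is likewise a statement about lattice polytopes); and in the projecting branch the direction $v$ furnished for a non-lattice slice need not even be a lattice vector, so the lift $\Sigma$ need not be a lattice projection and $\Sigma(P)$ need not be a lattice polytope with the claimed hollow slices. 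Repairing this forces you to strengthen the induction from hollow lattice polytopes to hollow rational polytopes or hollow convex bodies, with a width/volume statement robust enough to survive slicing; this is essentially the route by which the conjecture was in fact later proved by Nill and Ziegler (\emph{Projecting lattice polytopes without interior lattice points}, Math. Oper. Res. \textbf{36} (2011), 462--467), who work with hollow convex bodies and obtain a uniform volume bound for hollow lattice $n$-polytopes admitting no lattice projection onto a hollow $(n-1)$-polytope, finishing exactly as you intend via Theorem \ref{LZ}. So your overall architecture (volume bound plus Theorem \ref{LZ}, flatness, slicing) points in a viable direction, but as written the proposal is not a proof: the type mismatch in the induction and the simultaneous-direction problem are genuine gaps.
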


\begin{exam}
 Christian Haase and G\"unter M. Ziegler showed in \cite{HZ} that 
the only lattice points of the 4-dimensional lattice simplex 
$\conv(e_1,e_2,e_3,e_4,2e_1+2e_2+3e_3+(k-6)e_4)$ are its vertices, 
if and only if $\gcd(k,6)=1$. Moreover they say that the only lattice 
points of the 5-dimensional lattice simplex 
$\conv(0,e_1,e_2,e_3,e_1+e_2+e_3+6e_4,2e_1+3e_2+4e_3+9e_5)$ are its 
vertices. It is easy to see that there are projections mapping them 
onto $2\Delta_2$.
\end{exam}

Now we will show how you can get a smooth projective surface associated 
to a 3-dimensional lattice polytope:

\medskip

Given a lattice polytope $P\subset\R^3$, consider the normal fan 
$\Sigma(P)$ defined by the cones $$\sigma(\theta):=\{y\in(\Z^3)^*\ : \ 
\min_{x\in P}\langle x,y\rangle=\langle z,y\rangle \ 
\forall z\in\theta\},$$ whereas $\theta$ is a face of $P$.

\medskip

Denote by $\P_P$ the projective toric variety defined by this fan over a 
field $K$. Now consider the compactification $\S_P\subset \P_P$ of the 
surface $$\Big\{\sum_{v\in P\cap\Z^3}a_vX_1^{v_1}X_2^{v_2}X_3^{v_3}=0\Big\}
\subset(\overline{K}^*)^3\hookrightarrow\P_P$$ in $\P_P$, whereas 
$\overline{K}$  denotes the algebraic closure of $K$ and the coefficents 
$a_v\in K$ satisfy the following generic condition: $$\Big\{\sum_{v\in 
\theta\cap\Z^3}a_v X_1^{v_1}X_2^{v_2}X_3^{v_3}=0\Big\}\subset(
\overline{K}^*)^3$$ is smooth for every face $\theta\subset P$.

\medskip

A.G. Khovanskii showed in \cite{Kh78} that the number of interior lattice 
points of $P\subset\R^3$ is the same as the geometric genus of the surface
 $\S_P$. So by considering lattice polytopes $P\subset\R^3$ without interior
 lattice points we get -- after resolution of singularities -- some smooth 
projective surfaces of genus $0$, i.e. smooth projective surfaces which 
are ``nearly'' rational.

\medskip

In fact if $P$ is a Cayley polytope of two polygons, the corresponding 
surface will be rational. However if $P$ has a projection onto $2\Delta_2$ 
we will get a conic bundle, which -- in general -- will be not rational. 
Now consider the maximal exceptional lattice polytopes.

\medskip

$\S_{P_1}$ is the Godeaux surface $$\left\{\sum_{i=1}^4 x_i^5=0,\ x\in 
\P^3\right\}/\mu_5,$$ whereas $\zeta.x_i:=\zeta^i x_i$, $\zeta^5=1$. 

$\S_{P_5},\S_{P_7},\S_{P_8}$ and $\S_{P_9}$ are Enrique surfaces. $\S_{P_5}$ can be 
realized as
$$\S_{P_5}:=\left\{x_1^8+x_2^8+x_3^4+x_4^2=0,\ x\in \P^3(1,1,2,4)\right\}/\mu_2,$$ 
whereas $(-1).x:=(-x_1:x_2:x_3:x_4)$.

$\S_{P_8}$ is the closure of $$\left\{((u:v),(x:y:z))\in \P^1\times\P^2(1,1,2)\ :\ (u^2+v^2)(x^4y^4+x^4z^2+y^4z^2)=0\right\}/\mu_2,$$ whereas 
$(-1).((u:v),(x:y:z)):=((u:-v),(x:-y:-z))$.

$\S_{P_9}$ is the closure of \begin{eqnarray*}
\Big\{\left((x_0:x_1),(y_0:y_1),(z_0:z_1)\right)\in \P^1\times\P^1\times\P^1\ : \\ (x_0^2+x_1^2)(y_0^2+y_1^2)(z_0^2+z_1^2)=0\Big\}/\mu_2,\end{eqnarray*} whereas $(-1).(x,y,z):=\Big((x_0:-x_1),(y_0:-y_1),(z_0:-z_1)\Big).$

$\S_{P_3}$ is the cubic del Pezzo surface $$\left\{\sum_{i=1}^4 x_i^3=0,\ x\in 
\P^3\right\}.$$

$\S_{P_2}$ is the quasi-homogeneous nonic in weighted projective space 
$$\S_{P_2}:=\left\{x_1^9+x_2^9+x_3^9+x_4^3=0,\ x\in \P^3(1,1,1,3)\right\}/\mu_9,$$ 
whereas $\zeta.x:=(\zeta x_1:\zeta^4 x_2:x_3:\zeta^6 x_4)$, $\zeta^9=1$.

$\S_{P_4}$ is the quasi-homogeneous quartic in weighted projective space 
$$\left\{x_1^4+x_2^4+x_3^4+x_4^2=0,\ x\in \P^3(1,1,1,2)\right\}.$$

$\S_{P_6}$ is the quasi-homogeneous sextic in weighted projective space 
$$\left\{x_1^6+x_2^6+x_3^3+x_4^2=0,\ x\in \P^3(1,1,2,3)\right\}.$$

\bigskip

The degree of a polytope $P\subset\R^n$ is the biggest integer $d\in\N$ such that $$\Big|\Big((n+1-d)P\Big)^\circ\cap\Z^n\Big|>0.$$ Consequently, the degree of a 
lattice polytope $P\subset\R^3$ without interior lattice points is $0, 1$ or $2$. 
The lattice polytopes of degree smaller than $2$ are allready classified 
(cf. \cite{BN}), which generalizes the result of Arkinstall, Khovanskii, 
Koelman and Schicho:

\medskip

\begin{theorem}[Batyrev, Nill]
 \label{BN}
Let $P$ be an $n$-dimensional lattice polytope. If $\deg(P)=0$, then $P\cong\Delta_n$. 
If $\deg(P)=1$, then $P$ is an $(n-2)$-fold lattice pyramid over $2\Delta_2$ or a 
Lawrence polytope, i.e. a Cayley polytope of $n$ intervals.
\end{theorem}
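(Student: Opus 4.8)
The plan is to first translate the notion of degree into Ehrhart theory. Writing the Ehrhart series as $\sum_{k\ge0}|kP\cap\Z^n|\,t^k=h^*(t)/(1-t)^{n+1}$ with $h^*(t)=1+h_1^*t+\cdots+h_s^*t^s$ and $h_s^*\ne0$, Ehrhart reciprocity identifies the codegree $n+1-s$ with the smallest dilation of $P$ containing an interior lattice point, so that $\deg(P)=s=\deg h^*$. The degree-$0$ case is then immediate: $h^*(t)=1$ gives normalized volume $h^*(1)=n!\vol(P)=1$, and since any full-dimensional lattice polytope subdivides into lattice simplices of normalized volume at least $1$, $P$ must be a single unimodular simplex, i.e. $P\cong\Delta_n$.

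For $\deg(P)=1$ we have $h^*(t)=1+h_1^*t$ with $h_1^*\ge1$, so that $|P\cap\Z^n|=n+1+h_1^*$ and, by reciprocity, $(n-1)P$ has no interior lattice point while $nP$ has exactly $h_1^*$. I would argue by induction on $n$, using two reductions. First, the lattice pyramid operation preserves the Ehrhart series and raises the dimension by one, hence preserves both the property $\deg=1$ and the stated conclusion: a pyramid over an $(n-3)$-fold pyramid over $2\Delta_2$ is an $(n-2)$-fold one, and a pyramid over a Cayley polytope of $n-1$ intervals is a Cayley polytope of $n$ intervals, the new apex contributing a degenerate one-point interval. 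Thus the target class is closed under pyramids. Second, if some vertex $v$ sits at lattice height one over the opposite facet $F$ with $P=\conv(v,F)$, then $P$ is the lattice pyramid over $F$, and $F$ is a degree-$1$ polytope of dimension $n-1$ to which the inductive hypothesis applies. Stripping such apices reduces the problem to the case where $P$ is not a lattice pyramid, the base $n=2$ being exactly the planar classification already cited, namely $2\Delta_2$ or a Lawrence polygon.

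It remains to handle a non-pyramid degree-$1$ polytope $P$ directly, and here I expect the real work to lie. The aim is a lattice-width dichotomy: using the interior lattice point $w\in(nP)^\circ$ forced by codegree $n$ together with the emptiness of the interior of $(n-1)P$, I would constrain the barycentric position of $w$ relative to the vertices of $P$ and deduce that $P$ has lattice width at most $2$. Every lattice pyramid already has width $1$ (project onto $\Delta_1$ separating the apex from the base), so width $2$ should occur only for $2\Delta_2$ itself and rigidly pin it down, while width $1$ exhibits $P$ as a Cayley polytope $\conv(P_0,P_1)$ of its two slices lying in consecutive parallel lattice hyperplanes. One then has to upgrade this coarse Cayley decomposition into a Cayley decomposition into $n$ intervals; I would feed the degree-$1$ condition back into the slices $P_0,P_1$, or iterate the width argument along the induced projection onto $\Delta_{n-1}$, to force the fibers to be segments, yielding the Lawrence structure.

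The main obstacle is precisely this last stretch: establishing the width bound independently of the classification, so as to avoid circularity, and then refining a width-one decomposition $\conv(P_0,P_1)$ into genuine interval fibers. The sharpness of the width bound at $2\Delta_2$ and the fact that the slices $P_0,P_1$ need not themselves be simplices are exactly what makes this delicate, and controlling how the degree-$1$ hypothesis is inherited under projection is where I expect the argument to require the most care.
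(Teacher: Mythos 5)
The parts of your proposal that are actually argued are correct: the Ehrhart-theoretic reformulation of degree via $h^*$, the degree-$0$ case (normalized volume $h^*(1)=1$ forces a unimodular simplex), and the reduction along lattice pyramids (which preserve the $h^*$-polynomial) are all standard and sound. But note that the paper under review does not prove this theorem at all --- it quotes it from \cite{BN} --- and your proposal does not prove it either: everything after the pyramid reduction is stated as intention (``I would constrain\dots'', ``should occur only\dots'', ``one then has to upgrade\dots''). The non-pyramid case, i.e.\ the width bound, the rigidity of width $2$, and the refinement of a width-$1$ decomposition into interval fibers, is precisely the content of the Batyrev--Nill theorem, and it is exactly the part you leave open. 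So this is a genuine gap, not a difference of route.

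Moreover, the one concrete mechanism you do propose for closing it fails. You suggest ``feeding the degree-$1$ condition back into the slices $P_0,P_1$'' of a width-one decomposition $P=\conv\bigl(P_0\times\{0\},\,P_1\times\{1\}\bigr)$. The degree of a Cayley polytope is not controlled by the degrees of its slices: take $P_0=\conv(0,2e_1)$ and $P_1=\conv(0,2e_2)$, so that $P=\conv(0,\,2e_1,\,e_3,\,2e_2+e_3)\subset\R^3$. Both slices are lattice intervals, hence of degree $1$, yet $(1,1,1)$ is an interior lattice point of $2P=\conv(0,\,4e_1,\,2e_3,\,4e_2+2e_3)$ (its barycentric coordinates are $\tfrac14,\tfrac14,\tfrac14,\tfrac14$), so $\deg(P)=2$. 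Thus slice-wise degree hypotheses can never force the Lawrence structure; one must use $\deg(P)\le1$ \emph{globally} to make the interval structures of the two slices compatible with a single projection, and no argument for this is indicated. Similarly, the width bound cannot be obtained ``for free'': applying flatness-type results to the hollow polytope $(n-1)P$ only gives $\mathrm{width}(P)\le \mathrm{Flt}(n)/(n-1)$, which is not $\le 2$ in general dimension, so an independent argument specific to degree $1$ is required there as well. These two missing steps are where the theorem lives.
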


\begin{coro}
Let $P\subset\R^n$, $n\geq3$, be a lattice polytope and $\deg(P)\leq1$. Then is $P$ 
a Cayley polytope.
\end{coro}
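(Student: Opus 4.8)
The plan is to reduce everything to the Batyrev--Nill classification (Theorem~\ref{BN}) and then to check, in each of the two possible degree cases, that the polytope produced is a Cayley polytope. Since $\deg(P)\leq1$ only the cases $\deg(P)=0$ and $\deg(P)=1$ occur, and in each case Theorem~\ref{BN} lists the possibilities explicitly, so the work is to recognise each listed shape as a Cayley polytope, i.e.\ as a lattice polytope admitting a projection onto $\Delta_1$.

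First I would handle $\deg(P)=0$. Here Theorem~\ref{BN} gives $P\cong\Delta_n$, and I would simply exhibit the Cayley structure directly: projecting onto the first coordinate sends $e_1$ to $1$ and the remaining vertices $0,e_2,\ldots,e_n$ to $0$, which realises $\Delta_n\cong\conv(0\times\Delta_{n-1},\,e_1\times\{0\})$ as the Cayley polytope of the base $\Delta_{n-1}\subset\langle e_2,\ldots,e_n\rangle$ together with the single lattice point $\{0\}$. Thus $\Delta_n$ is a $1$-fold lattice pyramid over $\Delta_{n-1}$ and in particular a Cayley polytope.

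Next I would treat $\deg(P)=1$. By Theorem~\ref{BN}, $P$ is either a Lawrence polytope or an $(n-2)$-fold lattice pyramid over $2\Delta_2$. A Lawrence polytope is by definition the Cayley polytope of $n$ intervals, so this alternative is a Cayley polytope with nothing further to prove. For the pyramid alternative I would invoke the general observation that any $k$-fold lattice pyramid with $k\geq1$ is, by definition, the Cayley polytope of its base $Q$ together with $k\geq1$ additional lattice points; since this involves at least two summands, the polytope projects onto $\Delta_1$ and is therefore a Cayley polytope.

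The one step that genuinely needs the hypothesis $n\geq3$, and which I expect to be the only real obstacle, is guaranteeing that the pyramid over $2\Delta_2$ really is a pyramid, i.e.\ that the pyramid parameter $k=n-2$ satisfies $k\geq1$. For $n\geq3$ this holds, so the Cayley structure is always present; but for $n=2$ the parameter degenerates to $n-2=0$ and the ``pyramid'' is $2\Delta_2$ itself, which by the polygon classification recalled in the introduction admits no projection onto $\Delta_1$ and is not a Cayley polytope. This is precisely the reason the corollary is stated for $n\geq3$, and pinning down this boundary is the delicate point; once it is settled, collecting the two degree cases finishes the proof.
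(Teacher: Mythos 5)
Your proposal is correct and matches the paper's intent exactly: the paper states this corollary without proof as an immediate consequence of Theorem~\ref{BN}, and your argument simply unpacks the two cases of that classification ($\Delta_n$ as a $1$-fold pyramid, Lawrence polytopes as Cayley by definition, and the $(n-2)$-fold pyramid over $2\Delta_2$ being Cayley precisely because $n-2\geq1$). Your identification of $n\geq3$ as the hypothesis ruling out the non-Cayley polytope $2\Delta_2$ itself is also the right reading of why the corollary is stated in this form.
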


Let $P\subset\R^3$ be a lattice polytope without interior lattice points. Then 
$\deg(P)\leq2$. Thus it remains to consider the case $\deg(P)=2$.\\
In the following we will generalize Theorem \ref{Howe} step by step in order to 
prove Theorem \ref{Main} in the end.

\bigskip
\noindent {\bf Acknowledgments:} The author would like to thank Victor Batyrev for 
discussions and joint work on this subject.

\bigskip
\section{White Lattice Simplices}
In the following we denote by $\Vol(P)=n!\vol(P)$ the normalized volume of an 
$n$-dimensional lattice polytope.

\medskip

If $F\subset\R^2$ is a lattice polygon without interior lattice points, we 
derive $F\cong 2\Delta_2$ or $F$ is a Lawrence polytope from section \ref{1}.

\medskip

We describe a 3-dimensional lattice polytope as white, if all its lattice points 
are on its edges. In \cite{Whi} G.K. White also proved the following generalization 
of Theorem \ref{Howe}, using inequalities of Gaussian brackets. We give a new proof:
\begin{prop} 
\label{White}
Let $P\subset\R^3$ be a white lattice simplex with $F\not\cong2\Delta_2$ for every 
facet $F\subset P$ of $P$. Then is $P$ a Cayley polytope.
\end{prop}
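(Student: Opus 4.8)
The plan is to read off a width-one projection of $P$ onto $\Delta_1$ from the geometry of a single facet, after first pinning down the edge structure. First I would invoke the planar classification quoted in Section~\ref{1}: since $P$ is white, every facet $F\subset P$ is a lattice triangle with no interior lattice points, and by hypothesis $F\not\cong 2\Delta_2$, so $F\cong T_h:=\conv(0,e_1,he_2)$ for some $h\geq 1$. Such a triangle has exactly one edge of lattice length $>1$, its other two edges being primitive, and it projects onto $\Delta_1$ via its first coordinate. Because each of the four facets contains at most one such long edge while each of the six edges of $P$ lies in exactly two facets, a double count gives that $P$ has at most two edges of lattice length $>1$; and two such edges, not being permitted to share a facet, must be opposite. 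I would record the resulting trichotomy $m\in\{0,1,2\}$ for the number of long edges, together with the fact that in every facet the two edges other than its longest one are primitive.

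Next I would fix a base facet $F$, place it in $\{x_3=0\}$ in the normal form $v_0=0$, $v_1=e_1$, $v_2=he_2$, and write the apex as $v_3=(p,q,d)$ with $d\geq 1$ its lattice height over $F$. If some vertex has lattice distance $1$ from its opposite facet, I choose that facet as base; then $P$ is a lattice pyramid of height one, the functional $x_3$ is constant on $F$ and differs by $1$ at the apex, every edge lattice point sits at height $0$ or $1$, and $P$ is Cayley. So I may assume all four heights are $\geq 2$. In the cases $m\in\{0,1\}$ I may moreover take $F$ to contain the long edge (if any), so that the three apex-incident edges are primitive and the long edge lies in $\{x_1=0\}$. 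The facet already carries the width-one functional $x_1$; the idea is to shear it vertically, setting $\ell:=x_1-c\,x_3$, and to choose $c\in\Z$ so that $\ell(v_3)=p-cd\in\{0,1\}$. Then $\ell$ is primitive and integral, equals $0$ along the long edge, and takes only its endpoint values on the primitive apex edges, so $\ell\in\{0,1\}$ on every lattice point of $P$ and $P$ is Cayley. When $h\geq 2$ the functional $x_1$ is the \emph{only} width-one functional on $T_h$, so this reduces the case $m=1$ to the single condition that $p\equiv 0$ or $p\equiv 1$ modulo $d$; when $h=1$ (the empty-simplex case $m=0$) there are three admissible base functionals and correspondingly more freedom.

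The main obstacle is to establish this congruence, and to treat $m=2$. For the congruence I would use whiteness together with the $T$-type structure of the remaining facets: the requirement that the facets through the apex be empty triangles not equal to $2\Delta_2$ constrains $p$ and $q$ modulo $d$ and $h$, and I expect these constraints to force the residue of $p$ into $\{0,1\}$ precisely when no $2\Delta_2$ facet is present. Concretely, a residue violating it should be shown to manufacture a facet $\cong 2\Delta_2$, which is exactly where the hypothesis is indispensable and where the finitely many exceptional white simplices of Theorem~\ref{Main} are excluded; in the subcase $m=0$ this recovers the classical fact, underlying Theorem~\ref{Howe}, that an empty tetrahedron has lattice width one. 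For $m=2$ the two opposite long edges force, by the matching argument, one long edge to meet the apex, so the vertical shear need not preserve width one along it. Here I would instead argue directly that the two lines spanned by the opposite long edges lie in parallel lattice hyperplanes at distance one, yielding the $2+2$ Cayley split $\{v_0,v_2\}$ versus $\{v_1,v_3\}$, and show that any larger distance again produces a facet $\cong 2\Delta_2$, contradicting the hypothesis. I anticipate this last residue-versus-$2\Delta_2$ dichotomy to be the delicate quantitative core of the proof.
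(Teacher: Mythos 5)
Your reduction is sound as far as it goes: the facet classification (every facet a Lawrence triangle $\conv(0,e_1,he_2)$), the double count giving at most two long edges, the observation that two long edges must be opposite, and the reformulation of ``Cayley'' as finding an integral functional $\ell=x_1-cx_3$ with $\ell(v_3)\in\{0,1\}$, i.e.\ a congruence $p\equiv 0,1 \pmod d$, are all correct (and the case $m=0$ is indeed settled by citing Theorem \ref{Howe}). But the proof stops exactly where the proposition begins. The two claims you defer --- that for $m=1$ a residue $p\not\equiv 0,1\pmod d$ forces some facet $\cong2\Delta_2$, and that for $m=2$ the two opposite long edges lie in parallel lattice planes at distance one unless a $2\Delta_2$ facet appears --- are not side lemmas to be filled in later; they \emph{are} the proposition. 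You offer no argument for either, only the expectation that the facet constraints ``should'' produce this dichotomy, and you say yourself that it is the delicate quantitative core. This is essentially the outline of White's original proof \cite{Whi}, where that core is carried out with inequalities of Gaussian brackets; without it, your text is a plan, not a proof.

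For comparison, the paper avoids the residue analysis entirely by inducting on the length of the longest edge. It picks an interior lattice point $f$ on a long edge $\conv(a,b)$ and splits $P$ into $\Delta=\conv(a,c,d,f)$ and $\Delta'=\conv(b,c,d,f)$; these are Cayley by Theorem \ref{Howe} (base case, edge length $2$) or by induction, hence each lies in a lattice slab of width one. If $a$ and $f$ lie on the same boundary plane of the slab containing $\Delta$, collinearity of $a,f,b$ puts $b$ on that plane too, and $P$ inherits the slab; in the remaining configurations short normalized-volume and gcd computations (e.g.\ $\Vol\bigl(\conv(0,e_1,a-e_2)\bigr)=\Vol(\Delta)=a_3$ forcing $a_3\mid(a_2-1)$, hence $a_2=1$) finish the case distinction. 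The subdivision trick converts the global width-one statement into gluing two already-known width-one structures, which is precisely what lets the paper bypass the quantitative dichotomy your proposal leaves open.
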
 
\begin{proof}
As every facet of $P$ is a Lawrence polygon, there are at most two edges of $P$ of 
length bigger than $1$, i.e. edges with more than two lattice points. By Theorem 
\ref{Howe} we may assume that there is at least one edge of length bigger than 1.

\begin{enumerate} 
%
%
\item[Case 1] \textbf{There is exactly one edge of length $l_1>1$.}\\
Denote the vertices of $P$ by $a,b,c,d\in\Z^3$ and let $f\in\conv(a,b)^\circ\cap\Z^3$ 
be an interior point of the edge $\conv(a,b)$.

\medskip

\begin{enumerate}
\item[Case A] \textbf{$l_1=2$}

\medskip

The lattice simplices $\Delta:=\conv(a,c,d,f)$ and $\Delta':=\conv(b,c,d,f)$ 
are Cayley polytopes by Theorem \ref{Howe}. So $\Delta$ is streched between 
parallel planes $E_1,E_2$ having distance $1$, and $\Delta'$ is streched 
between $E_1',E_2'$ having also distance $1$.

\medskip

If $a$ and $f$ are both contained in $E_1$ or $E_2$, then it will follow 
by linearity that $b$ is also contained in the same plane. Hence $P$ is 
a Cayley polytope streched between $E_1$ and $E_2$. The same argument 
holds if $b$ and $f$ are both contained in $E_1'$ or $E_2'$.

\medskip

If $\Delta$ is a pyramid, then $\Delta\cong\Delta_3$ will follow. So 
$P=\Delta\cup\Delta'\cong\conv(-e_1,$ $e_1,e_2,e_3)$ is a Cayley polytope. 
The same argument holds for $\Delta'$. So assume in the following that 
$\Delta$ and $\Delta'$ are not pyramids.

\medskip

Let $f=0, c=e_1,d=e_2,b=-a$ and $0\leq a_1,a_2<a_3$. Notice that 
$\Vol(\Delta)=\Vol(\Delta')=a_3$. We may assume that $\Delta$ is a Cayley 
polytope of $\Delta_1=\conv(0,e_1)$ and $\Delta_2=\conv(e_2,a)$, i.e. 
$\Vol\Big(\conv(0,e_1,a-e_2)\Big)$ $=\Vol(\Delta)=a_3$. Hence $\gcd(a_2-1,a_3)=a_3,$ 
which implies $a_3|(a_2-1)$. So $a_2=1$. It remains to consider the following 
two cases:

\medskip

\begin{enumerate}
\item[Case a:] $\Delta'$ is the Cayley polytope of $\Delta'_1=\conv(0,e_1)$ 
and $\Delta'_2=\conv(e_2,-a)$.
Here is $\Vol\Big(\conv(0,e_1,a+e_2)\Big)=\Vol(\Delta')=a_3$. Hence 
$\gcd(a_2+1,$ $a_3)=a_3,$ which implies $a_3|(a_2+1)=2$. As $a_3>a_2=1$ 
we derive $a_3=2$. Then $a_1\in\{0,1\}$.\\
If $a_1=0$, then $P\subset\{0\leq x_1\leq 1\}$ will be a Cayley polytope. 
If $a_1=1$, then $e_1$ and $e_2$ will have distance 1 to the plane 
$\lin(a,e_1-e_2)$, which includes $0,a,-a$. So again $P$ will be a Cayley polytope.

\medskip

\item[Case b:] $\Delta'$ is the Cayley polytope of $\Delta'_1=\conv(0,e_2)$ 
and $\Delta'_2=\conv(e_1,-a)$.
Here is $\Vol\Big(\conv(0,e_2,a+e_1)\Big)=\Vol(\Delta')=a_3$. Hence 
$\gcd(a_1+1,$ $a_3)=a_3,$ which implies $a_3|(a_1+1)$. Then $e_1$ and $e_2$ 
have distance 1 to the plane $\lin(a,e_1-e_2)$, which includes $0,a,-a$. 
Thus $P$ is a Cayley polytope.
\end{enumerate}

%
%
\medskip

\item[Case B] \textbf{$l_1>2$}

\medskip

By induction $\Delta$ and $\Delta'$ are both Cayley polytopes, whereas 
we adopt the same notation as before. Without loss of generality let 
$|\conv(a,f)^\circ$ $\cap\Z^3|>0$. Then $a$ and $f$ -- and thus also 
$b$ -- are contained in $E_1$ or $E_2$. Hence, $P$ is a Cayley 
polytope strechted between $E_1$ and $E_2$.
\end{enumerate}
%
%
\medskip

\item[Case 2] \textbf{There are two edges of lengths $l_1>1$, $l_2>1$.}

\medskip

Let $l_1+1=|\conv(a,b)\cap\Z^3|$ and $l_2+1=|\conv(c,d)\cap\Z^3|.$ There 
is a lattice point $f\in\conv(a,b)^\circ\cap\Z^3$ such that 
$|\conv(b,f)^\circ\cap\Z^3|=0$.

\medskip

By induction, $\Delta^{(1)}:=\conv(a,c,d,f)$ and $\Delta^{(2)}:=\conv(b,c,d,f)$ 
are Cayley polytopes, i.e. there is a number $z^{(i)}\in\Z$ and a linear 
functional $y^{(i)}$ $\in(\Z^3)^*$ in such a way, that $\Delta^{(i)}$ is 
contained in $W^{(i)}:=\{x\in\R^3: \ z^{(i)}\leq \langle x,y^{(i)}\rangle 
\leq z^{(i)}+1\}$, $i\in\{1,2\}$.

\medskip

The lattice polygon $\conv(c,d,f)$ is a Lawrence polygon. So every facet 
of $\Delta^{(i)}, \ i\in\{1,2\}$ is a Lawrence polygon.

\medskip

If $a$ and $f$ are both contained in the same boundary plane of $W^{(1)}$ 
then $b$ will be in it too. In this case $P\subset W^{(1)}$ is a Cayley 
polytope.\\ 
So let $a$ and $f$ be in different boundary planes of $W^{(1)}$. Thus 
$l_1=2$.\\
As $l_2>1$ we notice that $c$ and $d$ are in the same boundary plane of 
$W^{(i)}$.

\medskip

Hence $\Delta^{(i)}$ is a pyramid for $i\in\{1,2\}$ and thus a Lawrence 
polytope. We may choose $\Delta^{(1)}=\conv(0,e_1,l_2e_2,e_1+e_3)$. So 
we see that $P\subset\{0\leq X_1\leq1\}$ is a Cayley polytope.\qedhere
\end{enumerate}
\end{proof}

\medskip

\begin{prop}
\label{Simplex} 
Let $P\subset\R^3$ be a white lattice simplex. Then is $P$ a Cayley 
polytope or $P\cong2\Delta_3$.
\end{prop}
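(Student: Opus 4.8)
The plan is to reduce Proposition \ref{Simplex} to the already-established Proposition \ref{White} by isolating the single exceptional configuration. A white lattice simplex $P\subset\R^3$ has all of its lattice points on its edges, and by the classification from Section \ref{1} every facet is either equivalent to $2\Delta_2$ or is a Lawrence polygon. The dichotomy is therefore clear: if no facet of $P$ is equivalent to $2\Delta_2$, then Proposition \ref{White} applies directly and $P$ is a Cayley polytope. So the entire content of the proof lies in the remaining case, in which at least one facet $F\subset P$ satisfies $F\cong2\Delta_2$.

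The first step is thus to assume some facet $F\cong2\Delta_2$ and to normalize coordinates. I would apply an affine unimodular transformation sending $F$ to the standard $2\Delta_2=\conv(0,2e_1,2e_2)$ lying in the plane $\{x_3=0\}$, so that the three midpoints $e_1,e_2,e_1+e_2$ of its edges are lattice points of $P$. Let $d$ denote the fourth vertex of the simplex, lying off this plane. The goal is to show that the whiteness hypothesis, applied to the three remaining facets $\conv(0,2e_1,d)$, $\conv(0,2e_2,d)$ and $\conv(2e_1,2e_2,d)$, together with the constraint that $d$ contributes no new lattice points except possibly on its incident edges, forces $P\cong2\Delta_3$ (and in particular $d$ to sit at height $\pm1$ in a position symmetric to the base).

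The key steps, in order: (1) record that each of the three side facets is itself a lattice triangle with no interior lattice points, hence either a Lawrence polygon or another copy of $2\Delta_2$; (2) analyze the edge $\conv(0,d)$ — and likewise $\conv(2e_1,d)$ and $\conv(2e_2,d)$ — to see whether $d$ can be at lattice height greater than one, exploiting that any interior lattice point on these edges would have to be consistent with whiteness while producing no interior point of $P$; (3) use the fact that $e_1,e_2,e_1+e_2$ already lie on the base edges to constrain the primitive direction of $d$ modulo the base lattice, showing that $d$ must reduce to $e_1+e_2+e_3$ up to the transformations fixing $2\Delta_2$. Assembling these gives $P\cong\conv(0,2e_1,2e_2,e_1+e_2+e_3)$, which is exactly the double unimodular simplex $2\Delta_3$ after a final unimodular change of basis.

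The main obstacle I expect is step (2)–(3): ruling out configurations where $d$ lies at height greater than one, or at height one but in a position not unimodularly equivalent to $2\Delta_3$, since a priori a tall simplex over $2\Delta_2$ could still have all its lattice points confined to edges. The delicate point is that Theorem \ref{Howe} cannot be invoked (the vertices are not the only lattice points — the base midpoints are present), so one must argue directly that the presence of $2\Delta_2$ as a facet, combined with no interior points anywhere, pins the apex. I would handle this by slicing $P$ with the integer-height planes parallel to the base and checking that any slice at height strictly between $0$ and the apex must be lattice-point-free or else create an interior point; this volume/slicing bookkeeping, rather than any single clever inequality, is where the real work sits.
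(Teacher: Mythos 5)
Your reduction via Proposition \ref{White} and the normalization of the facet to $\conv(0,2e_1,2e_2)\subset\{x_3=0\}$ match the paper's first step, but your identification of the exceptional case is wrong, and this breaks the core of the plan. You claim that the non-Cayley alternative forces $d$ to reduce to $e_1+e_2+e_3$ and that $\conv(0,2e_1,2e_2,e_1+e_2+e_3)\cong2\Delta_3$. It does not: that simplex has normalized volume $4$, whereas $\Vol(2\Delta_3)=8$; worse, it lies in the slab $\{0\leq x_3\leq1\}$, so it is itself a Cayley polytope (a pyramid over the base). In general \emph{any} apex at lattice height $1$ yields a Cayley polytope, so steps (2)--(3) of your plan, which aim to pin the apex at height $\pm1$, are aimed at the wrong target and would necessarily miss the genuine exception. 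The simplex $2\Delta_3$ has its apex at lattice height $2$ over the base (e.g.\ $d=2e_3$), and it occurs exactly when a second facet --- and then every facet --- is equivalent to $2\Delta_2$, i.e.\ when $d\in2\Z^3$; the paper handles this case by observing that $\frac{1}{2}P$ is then a lattice simplex with unimodular facets and degree at most $1$, whence $\frac{1}{2}P\cong\Delta_3$.

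The remaining, genuinely hard, case is when every side facet is a Lawrence polygon; there one must show $d_3=1$, so that $P$ is Cayley, and your ``slicing bookkeeping'' only gestures at this without providing an argument. The paper's proof here is a specific density/pigeonhole contradiction: whiteness plus the Lawrence condition imply no lattice point $(v_1,v_2,1)$ lies in $P$, so the line through $d$ and any such point meets $P$ only in $d$, and intersecting these lines with the base plane produces the set $M=\bigl\{\bigl(v_1\cdot\tfrac{d_3}{d_3-1}-w_1,\ v_2\cdot\tfrac{d_3}{d_3-1}-w_2\bigr):v_1,v_2\in\Z\bigr\}$, which would have to avoid $2\Delta_2$ entirely --- contradicting the fact that $M$ meets the unit square $\{0\leq X_1,X_2\leq1\}\subset2\Delta_2$ when $d_3>1$. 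Without this argument (or a worked-out substitute), and with the exceptional polytope misidentified, the proposal does not establish the proposition.
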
 
\begin{proof} Because of Proposition \ref{White} we may assume 
$P=\conv(0,2e_1,2e_2,d)$ with $d\in\Z^3, d_3>1$. 
\medskip

If there is a facet $F\cong2\Delta_2$ of $P$, $F\not=P\cap X_3^\perp$, 
then every facet of $P$ will be of this kind and $\frac{1}{2}P$ will be 
a lattice polytope of degree $0$ or $1$ having only unimodular facets. 
So $P\cong 2\Delta_3$.

\medskip

Now let every facet $F\not=P\cap X_3^\perp$ be a Lawrence polygon. As $P$ 
is a simplex and $v:=(v_1,v_2,1)\not\in P \ \forall v_1,v_2\in\Z$, the line 
$g$ through $d$ and $v$ intersects with $P$ only in $d$. In particular 
$g\cap X_3^\perp\not\in 2\Delta_2$. Consequently $M\cap2\Delta_2=\emptyset$, 
whereas $$M=\Big\{v_1\cdot\frac{d_3}{d_3-1}-w_1,\ v_2\cdot\frac{d_3}{d_3-1}-w_2: 
\ v_1,v_2\in\Z \Big\}$$ with some $w_1,w_2\in\Z$. But this is a contradiction 
to the fact that $\emptyset\not=M\cap\{0\leq X_1,X_2$ $\leq1\}\subset 2\Delta_2$.
 So, $d_3=1$ and $P$ is a Cayley polytope.
\end{proof}
\bigskip
\section{White Lattice Polytopes}
\begin{prop}
\label{Pyramide}
Let $F$ be a lattice polygon, $d\in\Z^3$ and $P=\conv(F,d)\subset \R^3$ 
be a white lattice polytope. Then is $P$ a Cayley polytope.
\end{prop}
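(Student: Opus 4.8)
The plan is to reduce to the already–settled simplex case. First I would record what whiteness of $P$ forces on the base: any lattice point in the relative interior of $F$ would be an interior point of the facet $F$ of $P$, hence not on an edge of $P$, contradicting whiteness. Thus $F$ is a lattice polygon without interior lattice points, so by Section~\ref{1} either $F\cong 2\Delta_2$ or $F$ is a Lawrence polygon. If $F$ is a triangle, then $P=\conv(F,d)$ is a simplex and the assertion is precisely Proposition~\ref{Simplex}; so I may assume $F$ is a Lawrence polygon with at least four vertices. After an affine unimodular transformation I place $F$ in the plane $X_3=0$ as $F=\conv(0,e_1,h_1e_2,e_1+h_2e_2)$, so that $F$ lies between the lattice lines $X_1=0$ and $X_1=1$, and I write $h:=d_3>0$ for the lattice height of the apex $d$.

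Next I would cut $P$ by the plane spanned by $d$ and a diagonal of $F$. Triangulating $F$ along the diagonal $\conv(e_1,h_1e_2)$ into $T_1=\conv(0,e_1,h_1e_2)$ and $T_2=\conv(e_1,e_1+h_2e_2,h_1e_2)$ yields two lattice simplices $\Delta^{(i)}:=\conv(T_i,d)$ with $P=\Delta^{(1)}\cup\Delta^{(2)}$. The key point is that each $\Delta^{(i)}$ is again white: its lattice points are lattice points of $P$, hence lie on edges of $P$, and every edge of $P$ that meets $\Delta^{(i)}$ in more than a vertex is already an edge of $\Delta^{(i)}$, while the new diagonal edge carries no interior lattice point since its relative interior lies in $F^\circ$. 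Now the base facet $T_i$ has lattice width $1$ (in the $X_1$–direction), hence $T_i\not\cong 2\Delta_2$, which has lattice width $2$; since every facet of $2\Delta_3$ is $\cong 2\Delta_2$, this gives $\Delta^{(i)}\not\cong 2\Delta_3$, and Proposition~\ref{Simplex} shows that each $\Delta^{(i)}$ is a Cayley polytope.

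Finally I would glue the two Cayley structures along the shared facet $\conv(d,e_1,h_1e_2)$, in the spirit of the linearity argument in Proposition~\ref{White}. Restricting a sandwiching functional of $\Delta^{(1)}$ to the plane $X_3=0$ gives a functional taking at most two consecutive values on the Lawrence triangle $T_1$; when $h_1\geq 2$ this restriction is forced to be $\pm X_1$, so the functional has the form $y=(1,0,s)$ for some $s\in\Z$ with $\langle d,y\rangle=d_1+sh\in\{0,1\}$. This same $y$ takes the values $0,1,0,1$ on the vertices $0,e_1,h_1e_2,e_1+h_2e_2$ of $F$ and a value in $\{0,1\}$ on $d$, so it projects all of $P$ onto $\Delta_1$; hence $P$ is a Cayley polytope. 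The complementary possibility is that the restriction is constant on $T_1$, which forces $h=1$ and sandwiches $P$ between $X_3=0$ and $X_3=1$, again Cayley.

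I expect the gluing in the last paragraph to be the main obstacle: one must rule out that the two pieces are sandwiched along incompatible (non-parallel, or oppositely oriented) pairs of planes. The degenerate Lawrence shapes with one height equal to $1$ — in particular the unit square, whose base triangle is unimodular and therefore admits a third, diagonal width-$1$ functional that need not extend to $P$ — have to be handled separately, by exploiting the symmetry between the two triangulation triangles and between the $X_1$– and $X_2$–directions so that at least one sub-pyramid yields a functional extending to all of $P$.
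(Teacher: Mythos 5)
Your reduction to a Lawrence quadrilateral base and your treatment of the case $\max(h_1,h_2)\geq 2$ are correct: if, say, $h_1\geq 2$, then any sandwiching functional $y$ of $\Delta^{(1)}$ satisfies $h_1y_2\in\{0,\pm1\}$, hence $y_2=0$, and then $y$ is either essentially $X_3$ (forcing $d_3=1$) or takes the values $0,1,0,1$ on the vertices of $F$ and a value in $\{0,1\}$ on $d$, so it sandwiches all of $P$; the case $h_2\geq2$ is symmetric via $\Delta^{(2)}$. This mechanism (a long base edge rigidifies the direction of the width-$1$ functional) is genuinely different from the paper's, and on this part your argument is clean. The gap is the case you postpone, the unit square $h_1=h_2=1$, and it is not a routine leftover: it is exactly the configuration the paper's proof is built to handle, and your appeal to symmetry cannot close it. For $\Delta^{(1)}=\conv(0,e_1,e_2,d)$ the only obstruction is a functional of diagonal type $(1,1,s)$, which is consistent with the vertex constraints iff $d_1+d_2\equiv 0,1\pmod{d_3}$; for $\Delta^{(2)}=\conv(e_1,e_2,e_1+e_2,d)$ the diagonal type needs $d_1+d_2\equiv 1,2$; for the two simplices over the other diagonal the types $(1,-1,s)$ and $(-1,1,s)$ need $d_1-d_2\equiv 0,1$ resp.\ $d_2-d_1\equiv 0,1$. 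All of these congruences can hold simultaneously, e.g.\ when $d_1=d_2$ and $2d_1\equiv1\pmod{d_3}$, so nothing in your setup prevents every sub-simplex, for both diagonals, from being Cayley only in directions that do not extend to $P$. Ruling that scenario out requires using whiteness of $P$ itself beyond the whiteness of the sub-simplices.

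That extra input is the heart of the paper's proof, which otherwise shares your architecture (split a pyramid into two simplices and quote Proposition \ref{Simplex}). The paper passes to a sub-pyramid $Q$ over a unit square and first shows, via Theorem \ref{Howe} applied to $Q$, that $d_3>1$ forces some lateral edge, say $\conv(0,d)$, to contain an interior lattice point; it then triangulates with the diagonal through $0$, so that \emph{both} sub-simplices contain this edge. Any width-$1$ functional of such a sub-simplex must be constant on $\conv(0,d)$ (otherwise the interior lattice point of that edge would take a non-integral value), and this, combined with $\Vol(\Delta^{(i)})=d_3$ and the resulting gcd conditions (plus a separate pyramid/Lawrence analysis for the diagonal pairing), forces $d_1=0$ or $d_2=0$ after normalizing $0\leq d_1,d_2<d_3$ --- that is, it produces the global width-$1$ direction. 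To complete your proof you need this (or an equivalent) second mechanism for the square case. A minor blemish shared with the paper: your claim that the triangle case ``is precisely Proposition \ref{Simplex}'' overlooks the exception $P\cong2\Delta_3$ allowed there, which is a white pyramid over $2\Delta_2$ and not a Cayley polytope; the paper's proof elides this point as well.
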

\begin{proof}
 As $|F^\circ\cap\Z^3|=0$, we may suppose $F$ is a Lawrence polytope 
but not a simplex by Theorem \ref{BN} and Proposition \ref{Simplex}. 
So let $F$ be the Cayley polytope of two parallel edges $f^{(1)}$ and 
$f^{(2)}$. Assume that $X_3(f^{(1)})=X_3(f^{(2)})=0$ and 
$X_1(f^{(1)})=0, X_1(f^{(2)})=1$.

\medskip

Choose two intervals $\Delta_1\cong q^{(i)}\subset f^{(i)}$, $i\in\{1,2\}$, 
and consider the lattice subpolytope $Q:=\conv(q^{(1)},q^{(2)},d)\subset P$. 
Let $0\leq d_1,d_2<d_3$ and suppose $d_3>1$.

\medskip

Assume that the only lattice points of $Q$ are its vertices. Then is $Q$ a 
Cayley polytope by Theorem \ref{Howe}. But as all the facets of $Q$ have 
no interior lattice points we conclude that every subsimplex is unimodular 
and hence $d_3=1$, which is a contradiction. So we may assume that 
$\conv(0,d)^\circ\not=\emptyset$, $q_1=\conv(0, e_2)$ and 
$q_2=\conv(e_1,e_1+e_2)$. We will show that $d_1=0$, which will imply that 
$P\subset\{0\leq X_1\leq1\}.$

\medskip

Subdivide $Q$ into $\Delta^{(1)}:=\conv(0,e_1,e_1+e_2,d)$ and 
$\Delta^{(2)}:=\conv(0,e_2,e_1+e_2,d)$. By Proposition \ref{Simplex} 
they are both Cayley polytopes of normalized volume $d_3$. 

\medskip

If $\Delta^{(1)}$ is a pyramid, then it will be a Lawrence simplex and 
hence $d_1=d_2=0$.

\medskip

So let $\Delta^{(1)}$ be a Cayley polytope of $\conv(0,d)$ and 
$\conv(e_1,e_1+e_2)$ resp. $\conv(e_2,e_1+e_2)$. Then is 
$\Vol\Big(\conv(0,e_2,d)\Big)=\Vol(\Delta)=d_3$. Thus $\gcd(d_1,d_3)$ $=d_3$, 
which implies $d_3|d_1$. Hence $d_1=0$. 
\end{proof}

\medskip

\begin{prop} 
\label{Circuit}
Let $P\subset \R^3$ be a white lattice polytope with at most $5$ vertices. 
Then is $P$ a Cayley polytope or $P\hookrightarrow3\Delta_3$ and 
$(2\Delta_2)^\circ\hookrightarrow P^\circ$.
\end{prop}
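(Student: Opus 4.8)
The plan is to induct on the number of vertices and reduce everything to the simplex and pyramid cases that are already settled. Since $P$ is $3$-dimensional it has at least four vertices, and by hypothesis at most five. If $P$ has four vertices it is a simplex, and Proposition \ref{Simplex} gives that $P$ is a Cayley polytope or $P\cong2\Delta_3$; in the exceptional case I would simply note $2\Delta_3\subset3\Delta_3$ and that the facet $\conv(2e_1,2e_2,2e_3)\cong2\Delta_2$ has its relative interior inside $(2\Delta_3)^\circ$, so both asserted embeddings hold. If $P$ has five vertices, then combinatorially $P$ is either a pyramid over a quadrilateral or a triangular bipyramid. In the first case $P=\conv(F,d)$ with $F$ a quadrilateral, and Proposition \ref{Pyramide} immediately yields that $P$ is a Cayley polytope.

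The essential case is therefore the triangular bipyramid. Here there is a unique equatorial triangle $T=\conv(c,d,e)$ separating the two apices $a,b$, and its relative interior lies in $P^\circ$. Splitting along $T$ writes $P$ as the union of the two lattice simplices $\Delta^{(1)}:=\conv(a,T)$ and $\Delta^{(2)}:=\conv(b,T)$, each of which is again white, so by Proposition \ref{Simplex} each is a Cayley polytope or is equivalent to $2\Delta_3$. Since $T$ is a common facet, the whole configuration is governed by the lattice type of $T$, and I would split into the two cases $T\not\cong2\Delta_2$ and $T\cong2\Delta_2$.

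If $T\not\cong2\Delta_2$, then $T$ is a Lawrence polygon and hence projects onto $\Delta_1$. The idea is to promote this projection to all of $P$: the width-one slabs containing $\Delta^{(1)}$ and $\Delta^{(2)}$ supplied by Proposition \ref{Simplex} restrict on $T$ to projections onto $\Delta_1$, and because $T$ is Lawrence these essentially agree, so the two slabs can be chosen with a common functional. This exhibits $P$ inside a single slab of lattice width one, i.e.\ as a Cayley polytope. If instead $T\cong2\Delta_2$, then its relative interior already lies in $P^\circ$, giving $(2\Delta_2)^\circ\hookrightarrow P^\circ$. To obtain $P\hookrightarrow3\Delta_3$ I would bound the lattice heights of $a,b$ over the affine plane of $T$: whiteness forces each height to be at most $2$ (height $1$ gives the Cayley simplex, height $2$ gives $2\Delta_3$, and a larger height produces an edge or a facet carrying a lattice point off the edges of $P$). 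Moreover both heights cannot equal $2$, since gluing two copies of $2\Delta_3$ along a $2\Delta_2$ makes four of the vertices coplanar, turning $P$ into a pyramid over a $2\times2$ square whose centre falls into the relative interior of that facet, contradicting whiteness. Hence the pair of heights is $(1,1)$ or $(1,2)$, so after normalising $T=\conv(2e_1,2e_2,2e_3)$ the polytope $P$ is one of finitely many explicit white bipyramids, each of which one checks is contained in $3\Delta_3$.

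The main obstacle is precisely this bipyramid case in its two halves: matching the two width-one slabs across a non-$2\Delta_2$ triangle $T$ to realise $P$ as a Cayley polytope, and, when $T\cong2\Delta_2$, exploiting whiteness to bound the apex heights and thereby confine $P$ to $3\Delta_3$. By contrast the simplex and pyramid reductions are immediate from Propositions \ref{Simplex} and \ref{Pyramide}.
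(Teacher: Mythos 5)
Your skeleton matches the paper's: reduce to the triangular bipyramid (the paper's ``circuit'') via Propositions \ref{Simplex} and \ref{Pyramide}, split along the equatorial triangle $T$, and apply Proposition \ref{Simplex} to the two white halves -- this is exactly the paper's Case B. Two of your side claims are, however, wrong as stated. In the four-vertex case, the relative interior of the facet $\conv(2e_1,2e_2,2e_3)\subset 2\Delta_3$ lies in the \emph{boundary} of $2\Delta_3$, not in $(2\Delta_3)^\circ$, so it does not justify $(2\Delta_2)^\circ\hookrightarrow P^\circ$. And your exclusion of apex heights $(2,2)$ by coplanarity is not valid: with $T=\conv(0,2e_1,2e_2)$, $a=2e_3$, $b=2b'$ with $b'_3=-1$, the four points $a,b,2e_1,2e_2$ are coplanar only when $b'_1+b'_2=2$. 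The correct obstruction is that the segment $\conv(a,b)$ would meet the plane of $T$ in the lattice point $\frac{1}{2}(a+b)$, which cannot lie in $T^\circ$ since $2\Delta_2$ has no interior lattice points (the paper instead argues via $\deg\big(\frac{P}{2}\big)\leq1$).

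The genuine gap is the case $T\not\cong2\Delta_2$, which is where the paper does essentially all of its work; your one-sentence mechanism -- the two width-one slabs ``essentially agree'' on $T$ because $T$ is Lawrence, hence admit a common functional -- fails for three separate reasons. First, if $T\cong\Delta_2$ (which your Lawrence case includes), the width-one direction on $T$ is far from unique: $X_1$, $X_2$, $X_1+X_2$ and the constant functional all have width at most one on $T$, so the two restrictions need not agree at all; the paper must treat this by a completely different argument (its Case A: inscribe a circuit whose only lattice points are its vertices, apply Theorem \ref{Howe}, then eliminate the possible ``elongations'' by volume and gcd computations, several sub-cases ending in contradictions, i.e.\ non-existence rather than Cayley-ness). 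Second, if both halves are Cayley as pyramids over $T$, the two slab functionals \emph{do} agree (both constant on $T$), yet the slab containing their union has width $2$, not $1$; the functional that actually exhibits $P$ as a Cayley polytope is the Lawrence direction of $T$, and proving it has width one on all of $P$ requires the circuit condition to pin down the apex coordinates together with arithmetic like $a_3\mid(a_2-1)$ (the paper's Cases 1--3 of Case B). Third, even when both restrictions equal the unique nonconstant width-one functional of a non-unimodular Lawrence triangle, lifting them to one functional on $\Z^3$ requires simultaneous congruence conditions on the two apices modulo $a_3$ and $|b_3|$, which is not automatic; in fact the paper's Case 4 shows this configuration cannot occur at all, again by an arithmetic contradiction, not by producing a common slab. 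So the heart of the proposition -- that $P$ is Cayley whenever $T\not\cong2\Delta_2$ -- is asserted in your proof rather than proved, and the mechanism you propose to supply it is not correct.
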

\begin{proof}
By the Propositions \ref{Simplex} and \ref{Pyramide} we may assume that $P$ 
is a circuit, i.e. the convex hull of a 2-dimensional lattice simplex $\Delta$ 
and a 1-dimensional lattice polytope $\conv(a,b)$, each without interior 
lattice points, such that $\conv(a,b)^\circ,\Delta^\circ\subset P^\circ$. 
By Theorem \ref{Howe} we may moreover assume that there is an edge of $P$ 
of length $n\geq2$. Consider the following two cases:

\medskip

\begin{enumerate}
 \item[Case A:] $\Delta\cong\Delta_2.$

\medskip

There are lattice points $c,d\in P\cap\Z^3$ such that $\conv(\Delta,c,d)$ 
is a circuit whose only lattice points are its vertices. By Theorem 
\ref{Howe} we may suppose that $\conv(\Delta,c,d)=\conv(0,e_1,e_2,-e_3,f)$ 
with $\Delta=\conv(0,e_1,e_2)$, $f=e_1+xe_2+ye_3$ and $x,y\in\N$. As $P$ is 
a circuit we see moreover $x>0$ and $1+x<y$, in particular $y>1$. In order 
to get $P$ back, it remains to elongate $f$ or $-e_3$ from $\Delta$. If we 
elongate $f$ from $e_1$, then $P\subset\{0\leq X_1\leq1\}$ will be a Cayley 
polytope. The same will be if we elongate $-e_3$ from $0$ or from $e_2$. 
We will show now by assuming the contrary that in the remaining three cases 
you cannot reach any subpolytope of $P$:

\medskip
\begin{enumerate}
 \item [Case 1:] Elongate $f$ from $0$.

\medskip
Here we get the circuit $\conv(0,e_1,e_2,-e_3,2f)\subset P$. By 
Proposition \ref{Simplex} is the subsimplex $\conv(0,e_1,e_2,2f)$ 
a Cayley polytope.

\medskip
If it was a pyramid, then it would follow from Theorem \ref{BN} that 
$f_3|f_1$ and $f_3|f_2$, which would be a contradiction.
 \medskip

Thus $\conv(0,e_1,e_2,2f)$ is the Cayley polytope of $\conv(0,2f)$ and 
$\conv(e_1,e_2)$. So $\Vol\Big(\conv(0,2f,e_1-e_2)\Big)=
\Vol\Big(\conv(0,e_1,e_2,2f)\Big)$ $=2y$ and consequently $2x+2=2y$, which 
is a contradiction to the fact that $1+x<y$.

\medskip

 \item [Case 2:] Elongate $f$ from $e_2$.

\medskip
Here we get the circuit $\conv(0,e_1,e_2,-e_3,2f-e_2)\subset P$. As before 
we receive $2y=\Vol\Big(\conv(0,2f-e_2,e_1-e_2)\Big)$, which implies $2y|(2x+1)$, 
which is a contradiction.

\medskip

 \item [Case 3:] Elongate $-e_3$ from $e_1$.

\medskip

Here we get the cirucit $\conv(0,e_1,e_2,-e_1-2e_3,f)$. So $y=1$, which is 
again a contradiction.
\end{enumerate}

\medskip

 \item[Case B:] There is an edge of $\Delta$ of length $n\geq2$.

\medskip

By Theorem \ref{BN} we may assume $\Delta=\conv(0,ne_1,\delta e_2)$ with 
$\delta\in\{1,2\}, a_3>0>b_3$. Subdivide $P$ into $\Delta':=P\cap\{X_3\geq0\}$ 
and $\Delta'':=P\cap\{X_3\leq0\}$. By Proposition \ref{Simplex} they are 
Cayley polytopes or equivalent to $2\Delta_3$.

\medskip

If $\Delta'\cong2\Delta_3$ then $n=\delta=2$ and without loss of 
generality $a=2e_3$. Then $\Delta''\cong2\Delta_3$ or $b_3=-1$. 
The first case implies $\deg\Big(\frac{P}{2}\Big)\leq1$ and 
$\frac{P}{2}$ has only unimodular facets, which is not possible 
for a circuit. In the second case we get $P\hookrightarrow 3\Delta_3.$

\medskip

So let in the following $\Delta'$ and $\Delta''$ both be Cayley polytopes.

\medskip

\begin{enumerate}
 \item[Case 1:] $\Delta'$ and $\Delta''$ are pyramids over $P\cap X_3^\perp$.

\medskip

Here we get $a_3=-b_3=1.$ So we may assume $a=e_3$. If $\delta=n=2$, then 
$b=e_1+e_2-e_3$, $b=2e_1+e_2-e_3$ or $b=e_1+2e_2-e_3$, i.e. 
$P\hookrightarrow 3\Delta_3$.\\
 Now let $\delta=1$. As $P$ is a circuit, we get $b_2=1$. Then 
$P\subset\{0\leq X_2\leq1\}$ is a Cayley polytope.

\medskip

 \item[Case 2:] $\Delta'$ and $\Delta''$ are pyramids.

\medskip

Let $\Delta'$ be a pyramid over $\conv(0,ne_1,a)$. Then $\delta=1$ and 
$\conv(0,ne_1,$ $a)$ is a Lawrence polytope. Thus is $\Delta'$ a Lawrence 
polytope and this case is reduced to case 1.

\medskip

 \item[Case 3:] Exactly one of the simplices $\Delta'$ and $\Delta''$ 
is a pyramid.

\medskip

Without loss of generality let $\Delta''$ be a pyramid and $\Delta'$ 
be the Cayley polytope of $\Delta'_1=\conv(0,ne_1)$ and 
$\Delta'_2=\conv(e_2,a)$. Moreover let $0\leq a_2<a_3>1$. Like in 
case 1 and 2 we derive $b_3=-1$, $\delta=1$ and $\Vol(\Delta')=na_3$.\\
Also $\Vol\Big(\conv(0,ne_1,a-e_2)\Big)=\Vol(\Delta')$ and hence 
$\gcd\Big(na_3,n(a_2-1)\Big)=na_3$, which implies $a_3|(a_2-1)$. So 
$a_2=1$. As $P$ is a circuit, we conclude $b_2=0$. Hence 
$P\subset\{0\leq X_2\leq1\}$ is a Cayley polytope.

\medskip

 \item[Case 4:] Neither $\Delta'$ nor $\Delta''$ is a pyramid.

\medskip

Like in case 3 we may assume $a_2=1$ and $b_3|(b_2-1)$. Consider the 
projection of $P$ to $X_1^\perp$. As $\Delta'$ is not a pyramid, we 
notice $a_3\geq2$. Then we get $b_2\leq0$ and $b_3-a_3b_2<0$, because 
$P$ is a circuit. Now $-b_3|(1-b_2)$, which is smaller than 
$1+\frac{-b_3}{a_3}$. This implies $b_3=-1$, which is a contradiction.
\qedhere
\end{enumerate}
\end{enumerate}
\end{proof}
\begin{prop}
 \label{empty}
Let $P\subset\R^3$ be a white lattice polytope. Then is $P$ a Cayley 
polytope or $\Vol(P)\leq C^{(\ref{empty})}$.
\end{prop}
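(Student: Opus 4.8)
The plan is to dispose of the cases left open by the earlier results and then reduce an arbitrary white polytope to a controlled circuit. By Propositions \ref{Simplex}, \ref{Pyramide} and \ref{Circuit} we may assume that $P$ is not a Cayley polytope, has at least six vertices, and is neither a simplex nor a lattice pyramid over a polygon; the task is then to bound $\Vol(P)$. Since $P$ is white, every facet $F\subset P$ is a lattice polygon with $|F^\circ\cap\Z^3|=0$, so by the classification recalled in section \ref{1} each facet satisfies $F\cong2\Delta_2$ or $F$ is a Lawrence polygon. In particular every edge of $P$ of length bigger than $1$ is either an edge of a $2\Delta_2$-facet or one of the at most two long parallel edges of a Lawrence facet, so the ``long'' directions of $P$ are few.

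First I would extract from $P$ a non-Cayley circuit. Using the edge points of $P$ I would choose a $2$-dimensional lattice subsimplex $\Delta\subset P$ and a segment $\conv(a,b)\subset P$, each without interior lattice points, with $\Delta^\circ,\conv(a,b)^\circ\subset Q^\circ$ for $Q:=\conv(\Delta,a,b)$. The claim is that if \emph{every} such circuit $Q\subset P$ were a Cayley polytope, then, exactly as in the gluing steps of Propositions \ref{White} and \ref{Circuit}, the two parallel planes of distance one attached to each $Q$ would propagate by linearity across the shared faces of $P$ and force $P$ itself to be a Cayley polytope, contrary to assumption. Hence some circuit $Q\subset P$ fails to be a Cayley polytope, and Proposition \ref{Circuit} yields $Q\hookrightarrow3\Delta_3$ together with $(2\Delta_2)^\circ\hookrightarrow Q^\circ\subset P^\circ$.

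Finally I would promote this local bound to a bound on all of $P$. Fix an affine unimodular chart in which $Q\subset3\Delta_3$. Because $P$ is white we have $P^\circ\cap\Z^3=\emptyset$, so no vertex of $P$ may be moved far from $Q$: a vertex lying outside a fixed dilate $c\,\Delta_3$ would, together with the triangle $(2\Delta_2)^\circ\subset Q^\circ\subset P^\circ$ and the fact that every facet of $P$ is Lawrence or $2\Delta_2$, produce either an interior lattice point of $P$ or a facet violating this dichotomy. Controlling the admissible positions of the remaining vertices confines $P$ to a bounded region, whence $\Vol(P)\leq C^{(\ref{empty})}$ for an explicit constant obtained from $\Vol(3\Delta_3)$.

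The hard part will be the forcing argument of the second paragraph -- showing that the absence of a non-Cayley circuit makes $P$ a Cayley polytope -- and the quantitative final step, namely turning ``$(2\Delta_2)^\circ\subset P^\circ$ together with whiteness'' into an effective bound on how far the extra vertices may sit from $Q$. The gluing of the distance-one slabs along shared faces, which was elementary for simplices and for the $5$-vertex case, must now be carried out for an arbitrary facet structure, and it is here that the combinatorics of the Lawrence facets has to do the work.
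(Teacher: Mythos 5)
Your plan has the same logical skeleton as the paper's proof, read contrapositively: the paper splits on whether $2\Delta_2\hookrightarrow P$; if yes, Proposition \ref{Simplex} shows every lattice point of $P$ has distance at most $2$ from the embedded $2\Delta_2$, which bounds $\Vol(P)$; if no, every facet of $P$ is a Lawrence polygon and the paper proves $P$ is a Cayley polytope. Your reductions in the first paragraph are fine, and your final step should in fact be done exactly as in the paper's first case: once you have $(2\Delta_2)^\circ\hookrightarrow Q^\circ\subset P^\circ$ you have a triangle $T\cong2\Delta_2$ with $T\subset P$, and the distance-$2$ bound from Proposition \ref{Simplex} already confines $P$ to a bounded neighborhood of $T$; the discussion of a vertex outside $c\,\Delta_3$ producing ``an interior lattice point or a facet violating the dichotomy'' is both vague and unnecessary.

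The genuine gap is the forcing step, and the mechanism you propose for it would not work. The implication ``every circuit in $P$ is Cayley $\Rightarrow$ $P$ is Cayley'' cannot be obtained by letting width-one slabs ``propagate by linearity across shared faces'': two circuits inside a polytope with many vertices need not share a $2$-dimensional face at all, and even when they share a short edge or a vertex, nothing relates the directions of their slabs -- Cayley structures of distinct subpolytopes are not linked by any linearity. What the paper does instead is anchor all circuits at one and the same long edge: choose a longest edge $\conv(0,ne_3)$, $n\geq2$, with adjacent facets $F_1,F_2$, fix vertices $a\in F_1$, $b\in F_2$, and for each lattice point $p\in P\backslash\{F_1\cup F_2\}\cap\Z^3$ consider the circuit $\conv(0,ne_3,a,b,p)$. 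Because this circuit contains an edge of length $\geq2$, that edge must lie entirely in one bounding plane of any width-one slab containing the circuit, so its Cayley structure has only three possible types: slab along the plane of $F_1$ (then $p$ and $b$ have distance $1$ to it), slab along the plane of $F_2$ (then $p$ and $a$ have distance $1$ to it), or a slab separating $\{0,ne_3\}$ from $\{a,b,p\}$. The heart of the proof is then that lattice points of two different types cannot coexist -- via Proposition \ref{Simplex} applied to $\conv(0,ne_3,q,q')$ and a separate position argument -- so all lattice points outside $F_1\cup F_2$ have a common type, which places $P$ in a single width-one slab. Without such an anchor, your quantification over all circuits gives no way to compare the slabs of different circuits, so the central implication of your argument remains unproved; as you concede yourself, this is precisely the hard part, and it is the part the paper's specific construction is designed to overcome.
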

\begin{proof}
If $2\Delta_2\hookrightarrow P$, then by Proposition \ref{Simplex} 
every lattice point of $P$ will have at most distance $2$ from 
$2\Delta_2$. This bounds $P$. Thus let $2\Delta_2\not\hookrightarrow P$. 
Then will every facet of $P$ be a Lawrence polytope.

\medskip 

Let $\conv(0,ne_3)$, $n\in\N$ be the longest edge of $P$, with 
adjacent facets $F_1,F_2$. By Howe's Theorem \ref{Howe} and the 
Propositions \ref{Simplex}, \ref{Circuit} we may suppose $n\geq2$ 
and that $P$ has more than $5$ vertices. Choose some vertices 
$a\in F_1$, $b\in F_2$ such that $\Delta:=\conv(0,a,b,ne_3)$ is a 
3-dimensional lattice simplex.

\medskip

If there is no lattice point $p\in P\backslash\{F_1\cup F_2\}
\cap\Z^3$, then $P$ contains a pyramid and will also be a Cayley 
polytope by Proposition \ref{Pyramide}. So let $p\in P
\backslash\{F_1\cup F_2\}\cap\Z^3$.

\medskip

By Proposition \ref{Circuit} we see that $\conv(0,ne_3,a,b,p)$ 
is a Cayley polytope. So there are possibilities: 

\medskip

\begin{enumerate}
\item[$A$] The distance of $p$ and $b$ to $F_1$ is 1.
\item[$B$] The distance of $p$ and $a$ to $F_2$ is 1.
\item[$C$] A plane through $a,b$ and $p$ has distance $1$ to $0$ 
and $ne_3$.
\end{enumerate}

\medskip

  If there is a lattice point $q\in P\cap\Z^3$ of type $A$, not 
of type $B$ and a lattice point $q'\in P\cap\Z^3$ of type $B$, 
not of type $A$, then we may assume $a=e_1$ and $b=e_2$. But then 
is $\conv(0,ne_3,q,q')$ not a Cayley polytope, which is a 
contradiction to Proposition \ref{Simplex}.

\medskip

  If there is a lattice point $q\in P\cap\Z^3$ of type $A$ or type $B$ 
and a lattice point $q'\in P\cap\Z^3$ of type $C$, then we may assume 
$a=e_1$ and $b=e_2$. But then there is no possible position for $q'$.

\medskip

So it is only possible to have further lattice points $p\in P\backslash
\{F_1\cup F_2\}\cap\Z^3$ of either type $A$ or type $B$ or type $C$. 
This means that $P$ is a Cayley polytope.\qedhere
\end{proof}
\bigskip
\section{Lattice Polytopes Without Interior Lattice Points}
In the following it remains to consider lattice polytopes having at 
least one facet with an interior lattice point in order to prove 
Theorem \ref{Main}.

\begin{rem} 
\label{Pick}
Let $F\subset \R^2$ be a lattice polygon with $\Vol(F)\leq 4$ and 
$F^\circ\cap\Z^2\not=\emptyset$. Pick's formula states $\Vol(F)=
|F\cap\Z^2|+|F^\circ\cap\Z^2|-2$. So we derive 
$|F^\circ\cap\Z^2|=1$ and $F$ is one of the following:
$$\{\conv(e_1,e_2,-e_1-e_2),\conv(\pm e_1,2e_2),
\conv(\pm e_1,\pm e_2),\conv(e_1,e_2,\pm(e_1+e_2))\}$$
\end{rem}
\begin{figure}[ht]
\begin{minipage}[c]{0.2\textwidth}
\centering \includegraphics[width=0.8in]{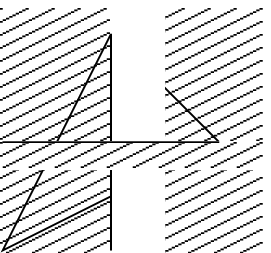}
\end{minipage}%
\begin{minipage}[c]{0.2\textwidth}
\centering \includegraphics[width=.8in]{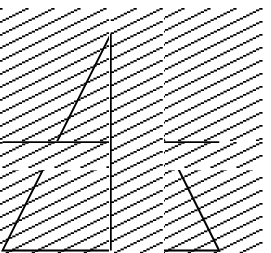}
\end{minipage}%
\begin{minipage}[c]{0.2\textwidth}
\centering\includegraphics[width=.8in]{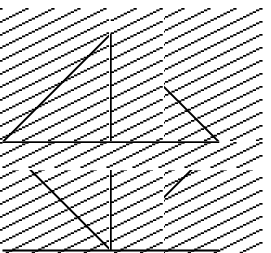}
\end{minipage}
\begin{minipage}[c]{0.2\textwidth}
\centering\includegraphics[width=.8in]{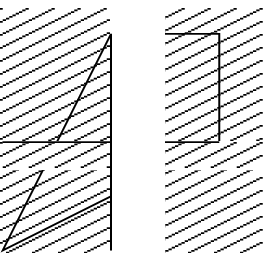}
\end{minipage}
\caption{The 4 smallest lattice polygons with an interior 
lattice point}
\end{figure}

\begin{prop} 
\label{Hensley}
Let $P=\conv(F,d)\subset\R^3$ be a lattice simplex without interior 
lattice points and $|F^\circ\cap\Z^3|=1,\  d\in\Z^3,d_3>0.$ Let every 
further facet of $P$ be without interior lattice points. Then is 
$d_3$ bounded by a constant $C^{(\ref{Hensley})}>0$.
\end{prop}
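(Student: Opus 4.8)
The plan is to read off the interior lattice points of $P$ slice by slice and to show that, once $d_3$ is large, the slice just above the base $F$ is forced to contain a lattice point, contradicting $|P^\circ\cap\Z^3|=0$.

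First I would normalize, assuming $\mathrm{aff}(F)=\{X_3=0\}$, so that $d_3$ is the lattice height of the apex over $F$ and $\Vol(P)=\Vol(F)\,d_3$. Since $F$ is a facet of the simplex $P$, it is a lattice triangle with exactly one interior lattice point, and there are, up to unimodular equivalence, only finitely many such triangles (a polygon with a bounded number of interior points has bounded volume, so Theorem~\ref{LZ} applies). Thus it suffices to bound $d_3$ for each fixed $F$; I will carry out the computation for the extremal triangle $F=\conv(0,3e_1,3e_2)$, whose interior point is $e_1+e_2$.

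The key observation is that the slice $T_h:=P\cap\{X_3=h\}$ is the homothetic image of $F$ with centre the apex foot $c=(d_1,d_2)$ and ratio $r_h=1-h/d_3$, that is $T_h=c+r_h(F-c)$. A lattice point $(q_1,q_2,h)$ with $1\le h\le d_3-1$ is interior to $P$ exactly when $(q_1,q_2)\in T_h^\circ$, so $|P^\circ\cap\Z^3|=0$ means every such slice is lattice-free. Pulling the three edge inequalities of $F$ back through the homothety, membership $(q_1,q_2)\in T_h^\circ$ becomes, with $\theta=h/d_3$,
$$q_1>\theta d_1,\qquad q_2>\theta d_2,\qquad q_1+q_2<\theta(d_1+d_2)+3(1-\theta).$$
This region is a right triangle with legs $3(1-\theta)$, and its ``corner'' lattice point $(\lfloor\theta d_1\rfloor+1,\lfloor\theta d_2\rfloor+1)$ lies in it as soon as $3(1-\theta)>2$, since then $\lfloor\theta d_1\rfloor+\lfloor\theta d_2\rfloor+2\le\theta(d_1+d_2)+2<\theta(d_1+d_2)+3(1-\theta)$. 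Taking $h=1$, i.e. $\theta=1/d_3$, this holds whenever $d_3>3$; hence $d_3>3$ would produce an interior lattice point, and we conclude $d_3\le3$ for this $F$.

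The same corner argument, after bringing $F$ into a reduced form, disposes of every ``large'' one-interior-point triangle. I expect the main obstacle to be the smallest triangles, whose legs are at most $2$, so that $3(1-\theta)>2$ is replaced by an inequality the corner argument can never satisfy, and where the scaled slice $T_1$ can moreover be positioned so that the forced lattice point falls exactly on $\partial T_1$ (the measure-zero ``holes'' of the associated lattice covering). There the hypothesis that every further facet of $P$ is free of interior lattice points enters: emptiness of the lateral facet over an edge $e_i$ forces a coprimality condition $\gcd(\,\cdot\,,d_3)=1$ on the apex coordinates, which excludes exactly these boundary positions and guarantees that some slice $T_h$ contains an honest interior lattice point. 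Assembling the finitely many residual shapes of $F$ then yields the uniform bound $C^{(\ref{Hensley})}$.
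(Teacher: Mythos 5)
Your opening reduction is legitimate: a polygon with exactly one interior lattice point has bounded area, so Theorem~\ref{LZ} leaves finitely many possible bases $F$, and the normalization fixing $\{X_3=0\}$ and reducing $d_1,d_2$ modulo $d_3$ is harmless. Your corner computation for $F=3\Delta_2$ is also correct: $d_3>3$ forces the interior lattice point $\bigl(\lfloor d_1/d_3\rfloor+1,\lfloor d_2/d_3\rfloor+1,1\bigr)$. But from there on the text is a plan, not a proof. The remaining ``large'' triangles are dismissed in one unproved sentence, and the small triangles --- which after your own reduction are the heart of the matter, e.g.\ $F=\conv(e_1,e_2,-e_1-e_2)$ --- are handled by an explicit guess (``I expect\dots''): you posit that emptiness of the lateral facets yields a condition $\gcd(\,\cdot\,,d_3)=1$ excluding the bad apex positions. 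That guess is both unproved and, I believe, misdirected: the hypothesis on the other facets is not what bounds $d_3$ (note that the paper's own two-line proof never invokes it). What your approach actually needs for a small triangle is a \emph{multi-height} argument, whereas you only ever look at the slice $h=1$. Writing $T_h=\tfrac{h}{d_3}(d_1,d_2)+\bigl(1-\tfrac{h}{d_3}\bigr)F$, the translation vectors of the slices $h=1,2,3,\dots$ form an arithmetic progression modulo $\Z^2$; for $T_h^\circ$ to miss $\Z^2$ this vector must lie within distance $O(h/d_3)$ of one of the finitely many exceptional classes $c$ with $(c+F^\circ)\cap\Z^2=\emptyset$. For $\conv(e_1,e_2,-e_1-e_2)$ these classes are $\pm(2/3,1/3)+\Z^2$, which have order three in the torus, so if the shifts at heights $1$ and $2$ are near them, the shift at height $3$ is forced to lie within $O(1/d_3)$ of $0$ modulo $\Z^2$, and then $T_3^\circ$ contains a lattice point once $d_3$ is large. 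Restricting attention to $h=1$, as you do, can never close these cases, precisely because of those exceptional classes; so there is a genuine gap.

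For comparison, the paper bypasses the entire case analysis: it chooses a lattice point $p\in\Z^3\setminus P$ below the plane of $F$, seeing only the facet $F$, so that $\conv(p,P)=P\cup\conv(p,F)$ and the pyramid $\conv(p,F)$ contributes no interior lattice points. Then $\conv(p,P)$ has exactly one interior lattice point (the point of $F^\circ$), Theorem~\ref{LZ} bounds its volume, and since $\Vol(P)=\Vol(F)\,d_3\ge 3\,d_3$ this bounds $d_3$. Your slicing strategy, if carried out case by case over all bases $F$, would have the merit of producing the sharp constants behind Remark~\ref{d<7}; but as written it proves the proposition only for $F\cong 3\Delta_2$.
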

\begin{rem}
\label{d<7}
 In fact, it is possible to show that 
%
%
$C^{(\ref{Hensley})}=6.$
\end{rem}
\begin{proof}
 There is a lattice point $p\in \Z^3\backslash P$ such that 
$\conv(p,P)=P\cup\conv(p,F)$ and $|\conv(p,P)^\circ\cap\Z^3|=1$. 
From Theorem \ref{LZ} it follows that $d_3$ is bounded.
\end{proof}
 It is easy to generalize this idea in order to prove the following:
\begin{prop}
Let $F\subset\R^n$ be an $(n-1)$-dimensional lattice poytope with 
$i>0$ interior lattice points and $d\in\Z^n$ such that $P=\conv(F,d)$ 
is an $n$-dimensional lattice polytope without interior lattice points. 
Then is the volume of $P$ bounded by a constant depending only on $n$ 
and $i$.
\end{prop}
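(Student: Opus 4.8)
The plan is to reduce the statement to Hensley's theorem by enlarging $P$ into a lattice polytope $Q$ that has exactly $i$ interior lattice points, so that $\Vol(P)\leq\Vol(Q)$ is bounded in terms of $n$ and $i$. After an affine unimodular transformation I may assume that $F$ lies in the hyperplane $H=\{x_n=0\}$ with primitive normal $e_n$ and that the apex is $d=(\bar d,h_d)$ with $\bar d\in\Z^{n-1}$ and $h_d=d_n>0$; here $h_d$ is the lattice height of $d$ over $H$. Since a pyramid satisfies $\Vol(P)=h_d\cdot\Vol_{n-1}(F)$, and since $F$ is an $(n-1)$-dimensional lattice polytope with $i>0$ interior lattice points, Hensley's theorem bounds $\Vol_{n-1}(F)\leq C(n-1,i)$. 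Hence it is equivalent to bound the single number $h_d$, and in particular I am already done whenever $h_d$ is bounded by a constant depending only on $n$ and $i$.

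To bound $h_d$ I would generalize the construction of Proposition \ref{Hensley}: adjoin one lattice point $p$ at height $-1$ lying beyond the facet $F$, i.e. in the relative interior of the cone with apex $d$ over $F$, so that $F$ is the only facet of $P$ visible from $p$ and $Q:=\conv(P,p)=P\cup\conv(p,F)$. I then claim $|Q^\circ\cap\Z^n|=|F^\circ\cap\Z^n|=i$. Indeed, because $p$ sees only $F$, every facet of $P$ other than $F$ still supports $Q$, so a lattice point of $Q^\circ$ at height $\geq1$ would be an interior lattice point of $P$, of which there are none; at height $0$ the interior of $Q$ meets $H$ exactly in the relative interior of $F$, giving the $i$ points of $F^\circ\cap\Z^n$; and below $H$ the polytope $\conv(p,F)$ contributes no interior lattice point, since there is no integer height strictly between $-1$ and $0$ and the only lattice point at height $-1$ is the vertex $p$. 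Applying Hensley's theorem to $Q$ gives $\Vol(Q)\leq C(n,i)$, and $P\subseteq Q$ yields the desired bound $\Vol(P)\leq\Vol(Q)$.

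The main obstacle is the existence of the lattice point $p$. By the reduction above I only need it when $h_d$ is large. The cross-section at height $-1$ of the shadow cone, expressed in the first $n-1$ coordinates, is the region $(1+\tfrac{1}{h_d})F^\circ-\tfrac{1}{h_d}\bar d$, a slightly dilated and translated copy of $F^\circ$ in $\R^{n-1}$, and a suitable $p$ exists precisely when this region contains a point of $\Z^{n-1}$. Applying first a unimodular shear $(\bar x,x_n)\mapsto(\bar x-x_n\bar v,x_n)$, which fixes $H$ and hence $F$, I may reduce $\bar d$ modulo $h_d$ and so assume that $\tfrac{1}{h_d}\bar d$ lies in the unit box $[0,1)^{n-1}$; the region to be hit is then a sub-unit translate of a slightly dilated copy of $F^\circ$. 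The hard part will be to guarantee a lattice point in such a region. Here I would exploit that an interior lattice point of $F$ forces the lattice width of $F$ to be at least $2$ in every direction and $\Vol_{n-1}(F)\geq1$, so that $F$ is fat enough that its dilates cover the relevant cosets; turning this into a clean covering estimate, uniform in the translation, is the crux of the argument and the step I expect to require the most care.
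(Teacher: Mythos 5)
Your plan coincides with the paper's own argument: Proposition \ref{Hensley} is proved there by adjoining a lattice point $p$ beyond $F$ at lattice distance one, so that $\conv(p,P)=P\cup\conv(p,F)$ has exactly the interior lattice points of $F$, and then invoking Theorem \ref{LZ}; for the present proposition the paper merely remarks that this idea ``is easy to generalize''. Your bookkeeping is correct and in fact fills in what the paper omits: the reduction to bounding $h_d$ via $\Vol(P)=h_d\cdot\Vol_{n-1}(F)$ together with Hensley's bound on $\Vol_{n-1}(F)$, the identification of the height $-1$ section of the shadow cone with $(1+1/h_d)F^\circ-(1/h_d)\bar d$, and the count $|Q^\circ\cap\Z^n|=i$. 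The gap is exactly the step you flag and postpone: the existence of the lattice point $p$. That is also precisely the step the paper asserts without proof, so you have isolated the real content of the statement --- but your proposal does not supply it, and so it is not yet a proof.

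Moreover, the repair you sketch (width at least $2$ in every direction, so that $F$ is ``fat enough that its dilates cover the relevant cosets'') cannot succeed: no covering estimate depending only on the width or volume of $F$ is available. For $F_0=\conv(e_1,e_2,-e_1-e_2)$, which has lattice width $2$ and an interior lattice point, the translate $F_0^\circ-(1/3,2/3)$ contains no lattice point at all, since the $\Z^2$-orbit of $(1/3,2/3)$ meets $F_0$ only in its boundary; in this example you are rescued solely by the coupling between the dilation $1+1/h_d$ and the denominator $h_d$ of the translation, because $(1/3,2/3)$ does lie in $(1+\epsilon)F_0^\circ$ for every $\epsilon>0$. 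So what you actually need is the covering property $F+\Z^{n-1}=\R^{n-1}$ (covering radius at most $1$), promoted to open sets by the strict dilation --- and for $(n-1)$-dimensional bases with $n-1\geq 3$ even that is false. Take $F=\conv(e_1,e_2,-e_1-e_2,e_3,-e_3)\subset\R^3$, the bipyramid over $F_0$, with interior lattice point $0$; let $h$ be a large multiple of $6$ and $\bar d=h\cdot(1/3,2/3,1/2)$. A lattice point $z$ in $(1+1/h)F^\circ-(1/h)\bar d$ would have to satisfy $|z_3+1/2|=1/2$ and then $(z_1+1/3,\,z_2+2/3)\in(1/2+1/h)F_0^\circ$, and each of the four candidates $(1/3,2/3)$, $(1/3,-1/3)$, $(-2/3,2/3)$, $(-2/3,-1/3)$ violates one of the defining inequalities of $(1/2+1/h)F_0^\circ$ once $1/2+1/h\leq 2/3$. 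Hence no admissible $p$ exists for this apex. This does not contradict the proposition --- that pyramid is not hollow, e.g. $(2,4,3,6)$ is an interior lattice point of it --- but it shows that the existence of $p$ cannot be deduced from fatness properties of $F$ alone: a correct proof must use the hypothesis that $P$ itself has no interior lattice points (or proceed along entirely different lines), which neither your outline nor the paper's one-line assertion does.
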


\bigskip

\begin{prop}
\label{inner}
Let $P\subset\R^3$ be a lattice polytope and $F\subset P$ be a facet of 
$P$ with $|F^\circ\cap\Z^3|>0$. Then is $P$ a Cayley polytope, $P$ can 
be projected to the double unimodular 2-simplex or 
$\Vol(P)<C^{(\ref{inner})}$.
\end{prop}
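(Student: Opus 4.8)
The plan is to turn the statement into a bound on the single integer $i:=|F^\circ\cap\Z^3|$, and then to run exactly the argument that proves Proposition \ref{Hensley}. First I would dispose of the trivial case: if $P^\circ\cap\Z^3\neq\emptyset$, then $\Vol(P)$ is already bounded by Hensley's theorem together with Theorem \ref{LZ}, so the third alternative holds. Hence I may assume $P$ is hollow. Normalize so that $F=P\cap\{x_3=0\}$, $P\subseteq\{x_3\geq0\}$, and a chosen point of $F^\circ\cap\Z^3$ sits at the origin, and set $h:=\max\{x_3(x):x\in P\}$. If $h\leq1$, then $P=\conv\big(P\cap\{x_3=0\},\,P\cap\{x_3=1\}\big)$ is the convex hull of two polygons at lattice distance $1$, i.e.\ a Cayley polytope; so from now on I assume $h\geq2$.

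The key reduction is an auxiliary polytope. Put $Q:=\conv(P,-e_3)$. Since the origin lies in $F^\circ$ and $-e_3$ sits directly below it, the fibre of the added cap $\conv(F,-e_3)$ at height $-\varepsilon$ is $(1-\varepsilon)F$ shrunk toward the origin, so every lattice point of $F^\circ$ acquires $Q$-material directly beneath it and becomes interior to $Q$; the cap contributes no further interior lattice points (no integer height lies strictly between $-1$ and $0$), the relative boundary of $F$ stays on $\partial Q$, and the part of $P$ in $\{x_3\geq1\}$ remains hollow. Thus $Q$ has exactly $i$ interior lattice points and $\Vol(P)\leq\Vol(Q)$. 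By Hensley's theorem, applied through Theorem \ref{LZ}, the volume of a $3$-dimensional lattice polytope with exactly $i$ interior lattice points is bounded by a function of $i$; hence it suffices to bound $i$ by an absolute constant, unless $P$ is Cayley or projects onto $2\Delta_2$. This is precisely the generalization of Proposition \ref{Hensley} indicated after its proof.

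It therefore remains to bound $i$, which I expect to be the main obstacle. Here I would use a pinching argument against the hollowness of $P$: fixing a vertex $d$ of $P$ at height $h$, the subpyramid $\conv(F,d)\subseteq P$ has height-$1$ fibre $\big(1-\tfrac1h\big)F+\tfrac1h\bar d$ (with $\bar d$ the projection of $d$), and since $h\geq2$ any interior lattice point of $F$ of sufficiently large lattice depth would be carried into the relative interior of that fibre, producing a lattice point of $P^\circ$ — a contradiction. Consequently the interior lattice points of $F$ have bounded depth, so after passing to the interior hull one gets $(F^\circ)^\circ\cap\Z^3=\emptyset$, i.e.\ $F^\circ$ is itself a hollow polygon and hence, by the classification recalled in Section \ref{1}, is $\cong 2\Delta_2$ or a Lawrence polygon. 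If $F^\circ$ is bounded, so is $i$ and we finish by the previous paragraph. The genuinely hard case is when $F^\circ$ ranges over the unbounded family of long (low-dimensional or Lawrence) polygons; then $F$ is a long ``slab'' with a distinguished lattice direction, and I would argue that this direction is inherited by the slices $P_k$ and hence by all of $P$, forcing a projection of $P$ onto $\Delta_1$ (the Cayley case) or onto $2\Delta_2$. Separating this long-$F$ subcase cleanly into the two structural alternatives, using Theorem \ref{BN} and the hollow slice polygons $P_1,\dots,P_{h-1}$, is where the real work lies.
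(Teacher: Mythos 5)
Your proposal does not close the case that \emph{is} the proposition. Your reduction (cone below $F$, then Hensley plus Theorem \ref{LZ}) converts the statement into: bound $i=|F^\circ\cap\Z^3|$ by an absolute constant unless $P$ is a Cayley polytope or projects onto $2\Delta_2$. But for the long-$F$ case --- $F$ thin with arbitrarily many interior lattice points, which your depth/flatness argument correctly cannot exclude --- you offer only the expectation that the long direction of $F$ ``is inherited by the slices and hence by all of $P$,'' and you say yourself this ``is where the real work lies.'' That inheritance claim is essentially the statement being proved; a proof cannot defer it. Note also that in the Cayley and $2\Delta_2$ alternatives $i$ really is unbounded, so no argument can simply ``bound $i$'': in the long-$F$ case one must \emph{produce the structure}, and your sketch contains no mechanism for doing so. The paper's proof spends essentially all its effort exactly there, by a device absent from your plan: it sets $s:=P\cap(X_3-1)^\perp$, notes that $s^\circ\subset P^\circ$ once $P\not\subset\{0\le X_3\le1\}$ (that containment being the Cayley alternative), so the rational polygon $s$ has no lattice point in its relative interior; it then projects $\pi:\Z^3\to\Z^2$ so that $\pi(s)$ is an interval of length $d$, shows $d\le2$ from the hollowness of $s$, and splits into cases: $d=1$ yields $\pi(P)\cong2\Delta_2$ or a volume bound, while $1<d\le\tfrac32$ and $\tfrac32<d\le2$ yield $\Vol(s)\le\tfrac{16}{3}$ resp.\ $\tfrac92$ together with height bounds (the last case via Remark \ref{Pick} and $C^{(\ref{Hensley})}=6$ from Remark \ref{d<7}), confining $P$ to a bounded truncated cone.

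Two intermediate steps are also false as written. First, $Q:=\conv(P,-e_3)$ need not equal $P\cup\conv(F,-e_3)$: the apex $-e_3$ can see facets of $P$ other than $F$. Take $F=\conv\bigl((-1,-1),(2,-1),(-1,2)\bigr)\cong3\Delta_2$ with interior point the origin, and $P=\conv\bigl(F\times\{0\},(10,10,1)\bigr)$, a hollow pyramid; the facet of $P$ in the plane $x_1+x_2-19x_3=1$ has $-e_3$ strictly on its outer side, and one checks that the boundary lattice points $(1,0,0)$ and $(0,1,0)$ of $P$ are strictly positive convex combinations of the five vertices of $Q$, hence lie in $Q^\circ$. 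So $|Q^\circ\cap\Z^3|=3$ while $i=1$: your claim ``$Q$ has exactly $i$ interior lattice points,'' on which the Hensley--Lagarias--Ziegler step rests, fails, and you give no bound on the excess. This is precisely why Proposition \ref{Hensley} is stated for pyramids all of whose remaining facets are hollow; a repaired apex may exist (here $(-10,-9,-1)$ sees only $F$), but the existence of a lattice apex at height $-1$ seeing only $F$ is itself an unproven assertion. Second, your opening move --- ``if $P^\circ\cap\Z^3\neq\emptyset$ then $\Vol(P)$ is bounded by Hensley plus Theorem \ref{LZ}'' --- is wrong, since Hensley's bound depends on the number of interior points ($P=N\Delta_3$ has one facet with interior points, unbounded volume, and is neither Cayley nor projects to $2\Delta_2$). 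The proposition must instead be read with the standing hypothesis of this section that $P$ has no interior lattice points; without that hypothesis the statement is false, so this is not a case one can dispose of but an assumption one must import.
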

\begin{proof} Let without loss of generality $F=P\cap X_3^\perp$ and 
$X_3|_P\geq0$. Define $s:=P\cap(X_3-1)^\perp$. Assume $P$ is not 
contained in $\{0\leq X_3\leq 1\}$, i.e. $s^\circ\subset P^\circ$.

\medskip

 There is a projection $\pi:\Z^3\rightarrow \Z^2$ mapping $s$ onto an 
interval $I$ of length $d=\vol(I)\geq1$ such that $d\leq f:=\max_{x,y\in s}
\abs{\langle v,x-y\rangle}$, whereas $v$ is a primitive normal vector of $I$.

\medskip

If $d=1$ we will derive $\Vol(P)$ is bounded or $\pi(P)\cong2\Delta_2$, 
because there is a lattice point $p\in P\cap\Z^3$ satisfying $p_3>1$ and 
$\pi(P)$ is a lattice polytope. So let $d>1$. By $d\leq f$ and 
$s^\circ\cap\Z^3=\emptyset$ we receive after some computation $d\leq2$.\\
We will distinguish the following three cases:

\medskip

\begin{enumerate}
  \item[Case 1:] $1<d\leq\frac{3}{2}$.

\medskip
As $\pi(P)$ is a lattice polytope and there is a lattice point 
$p\in P\cap\Z^3$ with $p_3>1$, we see $p_3\leq 4$ $\forall p\in P\cap\Z^3$ 
and $d\in\{\frac{4}{3},\frac{3}{2}\}.$ By calculating we receive hence 
$\Vol(s)\leq\frac{16}{3}$. Therefore is $P$ contained in the truncated 
cone spanned by $F$ and $s$ and contained in $\{0\leq X_3\leq 4\}.$ This 
bounds $\Vol(P)$.

\medskip

 \item[Case 2:] $\frac{3}{2}<d\leq2$, $\Vol(F)>4$.

\medskip
By computation we receive $\Vol(s)\leq\frac{9}{2}$. As $\Vol(F)$ $\geq5$ 
we conclude $p_3<20$ $\forall p\in P\cap\Z^3$. Therefore $P$ is contained 
in the truncated cone spanned by $F$ and $s$ and contained in 
$\{0\leq X_3\leq 19\}.$ This bounds $\Vol(P)$.

 \medskip

 \item[Case 3:] $\frac{3}{2}<d\leq2$, $\Vol(F)\leq4$.

\medskip
Again $\Vol(s)\leq \frac{9}{2}$. By Remark \ref{Pick}, $F$ is equivalent 
to one of the 4 polygons from Remark \ref{Pick}. Moreover we may assume 
that every facet of $P$ with interior lattice points is one of them. Let 
$p\in P\cap\Z^3$ with $p_3$ maximal and consider the lattice polytope 
$\Delta_p:=\conv(F,p)$.

\medskip 

If $F$ is the only facet of $\Delta_p$ with an interior lattice point, 
we will derive $p_3\leq6$ by Remark \ref{d<7}.

\medskip 

Else let there be a facet $F'\not=F$ of $\Delta_p$ with an interior lattice 
point $q\in F'^\circ\cap\Z^3$. As $P$ has no interior lattice points, we 
derive $F'\subset\partial P$. Thus $F'$ is one of the four lattice polygons 
from Remark \ref{Pick}. As $q_3\leq6$ again, we receive $p_3\leq 3q_3=18$. 

\medskip 

Therefore $P$ is contained in the truncated cone spanned by $F$ and $s$ and 
contained in $\{0\leq X_3\leq 18\}.$ This bounds $\Vol(P)$.\qedhere
\end{enumerate}
\end{proof}

\bigskip

\begin{proof}[Proof of Theorem \ref{Main}]
If a lattice polytope $P\subset\R^3$ without interior lattice points is 
neither a Cayley polytope nor can be projected to the double unimodular 
2-simplex, it will follow from the Propositions \ref{empty} and \ref{inner} 
that $\Vol(P)\leq \max\{C^{(\ref{empty})},C^{(\ref{inner})}\}$. We conclude 
that there is only a finite number of such exceptional polytopes by 
Theorem \ref{LZ}.
\end{proof}
\bigskip
\bibliographystyle{amsalpha}

\begin{thebibliography}{A}

\bibitem [1]{Ar} J.R. Arkinstall, {\em Minimal requirements for Minkowski's 
theorem in the plane. I, II}, Bull. Austral. Math. Soc. \textbf{22} (1980), 
no. 2, 259--283. MR 598699.

\bibitem [2]{BN} V. Batyrev, B. Nill, {\em Multiples of lattice 
polytopes without interior lattice points}, Moscow Math. J. \textbf{7} 
(2007), 195--207

\bibitem [3]{Fr} M. Franz, {\em Convex -- a Maple package for convex geometry},
 version 1.1 (2006), available at http://www-fourier.ujf-grenoble.fr/~franz/convex/

\bibitem [4]{HZ} C. Haase, G.M. Ziegler, {\em On the maximal width of empty 
lattice simplices}, Europ. J. Comb. \textbf{21}(2000), no 1, 111--119.

\bibitem [5]{He} D. Hensley, {\em Lattice vertex polytopes with interior 
lattice points}, Pacific J. Math \textbf{105} (1983), no. 1, 183--191.

\bibitem [6]{Kh78} A.G. Khovanskii, {\em Newton polyhedra and the genus of 
complete intersections}, Funct. Anal.  Appl. \textbf{12} (1978), no. 1, 38--46.

\bibitem [7]{Kh97} A.G. Khovanskii, {\em Newton polygons, curves on torus 
surfaces, and the converse Weil theorem}, Russian Math. Surveys \textbf{52} 
(1997), 1251--1279.

\bibitem [8]{Ko} R.J. Koelman, {\em The number of moduli of families of curves 
on toric surfaces}, 1991, Proefschrift, Katholieke Universitet te Nijmegen.

\bibitem [9]{LZ} J.C. Lagarias, G.M. Ziegler, {\em Bounds for lattice polytopes 
containing a fixed number of interior points in a sublattice}, Canad. J. Math 
\textbf{43} (1991), no. 5, 1022--1035.

\bibitem [10]{MS} D.R. Morrison, G. Stevens, {\em Terminal quotient singularities 
in dimension three and four}, Proc. Amer. Math. Soc. \textbf{90} (1984), 
no. 1, 15--17.

\bibitem [11]{Re} B. Reznick, {\em Clean lattice tetrahedra}, Preprint, 
math.CO/0606227v1.

\bibitem [12]{Se} A. Seb\H{o}, {\em An introduction to empty lattice-simplices}, 
Preprint, November 1998.

\bibitem [13]{Sca} H.E. Scarf, {\em Integral polyhedra in three space}, Math. 
Oper. Res. 10 (1985), 403--438.

\bibitem [14]{Sch} J. Schicho, {\em Simplification of surface parametrizations -- 
a lattice polygon approach}, J. Symb. Comp. \textbf{36}, (2003), 535--554.

\bibitem [15]{Whi} G.K. White, {\em Lattice tetrahedra}, Canad. J. Math. \textbf{16} 
(1964), 389--396.
\end{thebibliography}

\end{document}